\newtheorem{thm}{Theorem}[section]
\newtheorem{lem}[thm]{Lemma}
\newtheorem{prop}[thm]{Proposition}
\newtheorem{cor}[thm]{Corollary}
\theoremstyle{definition}
\newtheorem{defn}[thm]{Definition}
\newtheorem{rem}[thm]{Remark}
\newtheorem{ex}[thm]{Example}
\newcommand{\F}{\mathbb{F}}
\title{Additive decompositions of large multiplicative subgroups in finite fields}
\author{Chi Hoi Yip}
\address{Department of Mathematics \\ University of British Columbia \\ Vancouver  V6T 1Z2 \\ Canada}
\email{kyleyip@math.ubc.ca}
\keywords{sumset, additive decomposition, multiplicative subgroup, finite field}
\subjclass[2020]{11B13, 11B30, 11P70, 11T06}
\begin{document}

\maketitle

\begin{abstract}
We show that a large multiplicative subgroup of a finite field $\mathbb{F}_q$ cannot be decomposed into $A+A$ or $A+B+C$ nontrivially. We also find new families of multiplicative subgroups that cannot be decomposed as the sum of two sets nontrivially. In particular, our results extensively generalize the results of S\'{a}rk\"{o}zy and Shkredov on the additive decomposition of the set of quadratic residues modulo a prime. 
\end{abstract}

\section{Introduction}

Throughout the paper, let $p$ be a prime and $q$ a power of $p$. Let $\F_q$ be the finite field with $q$ elements. We always assume that $\F_q$ has characteristic $p$. Let $d \mid (q-1)$ such that $d>1$, we denote $S_d=\{x^d: x \in \F_q^*\}$ to be the subgroup of $\F_q^*$ with order $\frac{q-1}{d}$.  

A celebrated conjecture of S\'{a}rk\"{o}zy \cite{S12} asserts that $S_2$ cannot be written as $S_2=A+B$, where $A,B \subset \F_q$ and $|A|, |B| \geq 2$, provided that $q$ is a sufficiently large prime. Recall that for two sets $A,B \subset \F_q$, their {\em sum} is $A+B=\{a+b: a \in A, b \in B\}$. It is necessary to assume that $|A|,|B| \geq 2$ in the conjecture, otherwise there are many {\em trivial} additive decompositions of $S_2$ into the sum of two sets. The motivation of the conjecture is clear: sumsets have rich additive structure, while multiplicative subgroups do not possess too much additive structure. It is natural to consider the analogue of this conjecture over all finite fields and all multiplicative subgroups, namely, given a proper multiplicative subgroup $G$ of $\F_q$, can we find a nontrivial additive decomposition of $G$ as the sum of two sets $A, B \subset \F_q$? This natural extension was first studied by Shparlinski \cite{S13}, shortly after S\'{a}rk\"{o}zy's seminal paper \cite{S12} was published. It is likely that the answer remains negative provided that the index of the subgroup $G$ is fixed and $q$ is sufficiently large, and this can be regarded as \emph{the generalized S\'{a}rk\"{o}zy's conjecture}. S\'{a}rk\"{o}zy's conjecture, as well as the generalized S\'{a}rk\"{o}zy's conjecture, have attracted lots of attention in the last decade; see for example \cite{S13, S14, BKS15, S16, S20, HP, CY21, CX22, WS23} (listed in chronological order). 

Recently, Hanson and Petridis \cite{HP} showed that if $q$ is a prime and $S_d=A+B$ with $A, B \subset \F_q$ and $|A|,|B| \geq 2$, then $|A||B|=|S_d|$, that is, all sums $a+b$ must be distinct. This is a breakthrough on S\'{a}rk\"{o}zy's conjecture, and in particular, it implies that S\'{a}rk\"{o}zy's conjecture holds for almost all primes \cite[Corollary 1.4]{HP}. We also refer to the recent work by Chen and Xi \cite{CX22} on the best-known lower and upper bounds on $|A|$ and $|B|$ provided that $A+B=S_2$ and $A, B \subset \F_p$.

In this paper, we make progress on the generalized S\'{a}rk\"{o}zy's conjecture for large multiplicative subgroups in finite fields. In particular, we show that large multiplicative subgroups cannot be written as $A+A$ or $A+B+C$ nontrivially, extending and refining existing results in the literature significantly. We also obtain new families of $(d,q)$ such that $S_d \subset \F_q$ admits no nontrivial additive decomposition, as well as new structural information on $A$ and $B$ if $A+B=S_d$ (Proposition~\ref{prop:structure}). 

Our first result extends \cite[Theorem 1.2]{HP}, which is the main result in \cite{HP} by Hanson and Petridis, to all finite fields, with an extra assumption on the non-vanishing of a binomial coefficient.

\begin{thm}\label{thm:main}
Let $q$ be a power of a prime $p$ and let $d \mid (q-1)$ such that $d>1$. If $A,B \subset \F_q$ such that $A+B\subset S_d \cup \{0\}$ and $
\binom{|A|-1+\frac{q-1}{d}}{\frac{q-1}{d}}\not \equiv 0 \pmod p,$
then 
$$|A||B|\leq \frac{q-1}{d}+|A \cap (-B)|.$$
\end{thm}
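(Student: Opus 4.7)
The plan is to adapt the polynomial method of Hanson and Petridis~\cite{HP} from $\F_p$ to $\F_q$, with the binomial coefficient hypothesis controlling the extra subtleties that arise in positive characteristic. Set $n = |A|$, $m = |B|$, $k = (q-1)/d = |S_d|$, and $N = |A \cap (-B)|$, and introduce the polynomial
$$\phi(x) = \prod_{a \in A}(x + a) \in \F_q[x],$$
of degree $n$ with leading coefficient $1$. For each $b \in B$, $\phi(b) = \prod_{a \in A}(a + b)$ is a product of elements of $S_d \cup \{0\}$; since $S_d$ is multiplicatively closed, $\phi(b) \in S_d$ whenever $-b \notin A$, and $\phi(b) = 0$ exactly on $B \cap (-A)$. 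In particular $\phi(b)^k = 1$ for every $b \in B \setminus (-A)$, so the polynomial $\phi(x)^k - 1 \in \F_q[x]$ vanishes on $B \setminus (-A)$.

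The Stepanov-style strategy is to produce a nonzero polynomial $P(x) \in \F_q[x]$ of degree at most $k + N - 1$ that vanishes at every $b \in B$ with multiplicity at least $n$. Comparing total multiplicity with degree then immediately yields
$$n \cdot |B| \;\leq\; \deg P + 1 \;\leq\; k + N,$$
which is exactly the bound we want. The naive candidate $\phi(x)^k - 1$ has degree $nk$ (much too large) and only simple zeros at the points of $B \setminus (-A)$, so the construction must trade some of this degree for multiplicity by using the identity $\phi(b)^k = 1$ to collapse high powers of $\phi$ to constants on $B \setminus (-A)$.

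For the construction of $P$, I would follow the Stepanov template: search inside a carefully chosen finite-dimensional space (e.g.\ polynomials of the form $\sum_j R_j(x)\,\phi(x)^j$ with bounded-degree coefficients $R_j$), impose the $n$-fold vanishing conditions at each $b \in B$, and use $\phi(b)^k = 1$ on $B \setminus (-A)$ (together with the vanishing of $\phi$ on $B \cap (-A)$ to account for the $+N$) to absorb high exponents. Solvability of the resulting linear system reduces to the non-vanishing of a matrix determinant. A direct computation, expanding the relevant power of $\phi$ via the multinomial theorem and tracking top-order terms, should express this determinant as a Vandermonde-type factor times the binomial coefficient $\binom{n-1+k}{k}$; the latter appears because $\binom{n-1+k}{k}$ is the coefficient of $x^k$ in the formal series $(1-x)^{-n}$, which governs the leading contribution when reducing a power of $\phi$ modulo a low-degree polynomial. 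The hypothesis $\binom{n-1+k}{k} \not\equiv 0 \pmod p$ then guarantees that the determinant is nonzero in $\F_q$, so a nontrivial $P$ exists.

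The main obstacle will be executing the leading-coefficient calculation cleanly enough for the binomial $\binom{n-1+k}{k}$ to emerge unambiguously as the sole Stepanov obstruction. In the prime-field case $q = p$ of Hanson and Petridis, this binomial is automatically nonzero for size reasons (via Lucas' theorem, since $n + k \leq p$ is forced), so no extra hypothesis is needed; but in $\F_q$ with $q = p^e$ potentially much larger than $p$, carries in the base-$p$ expansion of $n - 1 + k$ can force the binomial to vanish, and the hypothesis is precisely what excludes this. Once the existence of $P$ is established, the degree-multiplicity inequality displayed above concludes the proof.
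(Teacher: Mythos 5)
Your high-level framing is right: this is a Stepanov-type argument in the spirit of Hanson--Petridis, the binomial coefficient $\binom{n-1+k}{k}$ is exactly the obstruction to the auxiliary polynomial being nonzero of the right degree, and the final step is a degree-versus-multiplicity count. But the proposal stops at the point where the proof actually begins. The construction you sketch --- a generic linear-algebra search in a space of polynomials $\sum_j R_j(x)\phi(x)^j$ --- is not the one that works here, and a dimension count in such a space will not produce the sharp constant $|A||B|\le k+N$ (that style of Stepanov argument typically loses a multiplicative factor). What is needed is the explicit polynomial
$$f(x)=-1+\sum_{i=1}^{n}c_i\,(x+a_i)^{\,n-1+k},$$
where the $c_i$ solve the Vandermonde system $\sum_i c_i a_i^{j}=0$ for $0\le j\le n-2$ and $\sum_i c_i a_i^{n-1}=1$. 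That system forces the coefficients of $x^{n-1+k-j}$ to vanish for $j\le n-2$, leaves leading coefficient $\binom{n-1+k}{k}$ (this is precisely where your hypothesis enters, not via a determinant of a reduction matrix), and the identity $(a_i+b)^{k+1}=a_i+b$ for $a_i+b\in S_d\cup\{0\}$ collapses $(b+a_i)^{n-1+k-j}$ to $(b+a_i)^{n-1-j}$ when evaluating derivatives at $b\in B$. None of this computation appears in your write-up, and it is the entire content of the theorem.

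Two further points would cause your plan to fail as stated. First, in characteristic $p$ you cannot certify a root of multiplicity $n$ by ordinary derivatives once $n>p$ (the $p$-th derivative is identically zero); you must use hyper-(Hasse) derivatives $E^{(j)}$ and the corresponding multiplicity criterion, which you never mention. Second, your accounting of the term $N=|A\cap(-B)|$ is off: you propose to enlarge the degree budget to $k+N-1$ while demanding multiplicity $n$ at \emph{every} $b\in B$, but at a point $b$ with $-b\in A$ one has $a+b=0$ for some $a\in A$, the relation $(a+b)^{k+1}=a+b$ gives no cancellation in the $(n-1)$-st hyper-derivative, and there is no evident way to recover full multiplicity there. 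The correct bookkeeping keeps $\deg f=k$ and settles for multiplicity $n-1$ at the $N$ points of $B\cap(-A)$ and $n$ elsewhere, giving $nm-N\le k$, which is the stated inequality. As written, the proposal is a roadmap with the destination correctly marked but the road not built.
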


We remark that in general the condition on the binomial coefficient cannot be dropped; see Example~\ref{ex:ex1}. Note that when $q$ is a prime, it is easy to see that the condition on the binomial coefficient is automatically satisfied and thus Theorem~\ref{thm:main} recovers \cite[Theorem 1.2]{HP}.

With a bit extra work, Theorem~\ref{thm:main} implies the following corollary (see also Proposition~\ref{prop:structure}), which generalizes \cite[Corollary 1.3]{HP} and makes new progress towards the generalized S\'{a}rk\"{o}zy's conjecture.

\begin{cor}\label{cor:a+b}
Let $d \mid (q-1)$ such that $\frac{q-1}{d}\leq \frac{2p}{3}$. If $A,B \subset \F_q$ such that $A+B=S_d$, then $|A||B|\leq \frac{q-1}{d}$; moreover, if $A+B=S_d$, then $|A||B|=\frac{q-1}{d}$ and all sums $a+b$ are distinct. In particular, if $\frac{q-1}{d}$ is a prime, then there is no nontrivial additive decomposition of $S_d$ into the sum of two sets.
\end{cor}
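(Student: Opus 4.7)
The plan is to reduce Corollary~\ref{cor:a+b} to Theorem~\ref{thm:main} by, after possibly swapping $A$ and $B$ (the hypothesis and conclusion of Theorem~\ref{thm:main} being symmetric in the two sets, since $A+B=B+A$ and $|A \cap (-B)|=|B\cap(-A)|$), verifying the binomial coefficient condition for the distinguished set. Set $s := \tfrac{q-1}{d}$. Because $A+B = S_d$ and $0 \notin S_d$, we have $A \cap (-B) = \emptyset$. Once Theorem~\ref{thm:main} is applicable, it yields $|A||B| \le s$, and combined with the trivial $|A||B| \ge |A+B| = s$ this gives $|A||B| = s$ with all sums $a+b$ distinct. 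When $s$ is prime, $|A||B| = s$ with $A, B$ nonempty forces $\min(|A|,|B|) = 1$, so no nontrivial decomposition exists.

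It remains to check the binomial coefficient condition for at least one of $A, B$. Fix $X \in \{A,B\}$ and note $|X| \le s$ (since for any $y$ in the complementary set, $X+y \subset A+B = S_d$), whence $|X|-1+s \le 2s-1 < 2p$ while $s < p$. A short case analysis via Lucas' theorem, splitting on $|X|-1+s < p$ versus $p \le |X|-1+s < 2p$, shows that $\binom{|X|-1+s}{s} \not\equiv 0 \pmod p$ if and only if $|X| \le p - s$. Thus it suffices to show $\min(|A|,|B|) \le p - s$.

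For this I would invoke Kneser's theorem for $A+B = S_d$. The additive stabilizer $H$ of $S_d$ in $(\F_q, +)$ is an $\F_p$-subspace, so $|H|$ is a power of $p$; but $|H|$ also divides $|S_d| = s$, which is coprime to $p$ (as $s \mid q-1$). Hence $H = \{0\}$, and Kneser gives $|A|+|B| \le s + 1$. Were both $|A|,|B| > p - s$, we would have $|A|+|B| \ge 2(p-s)+2$, and combining with Kneser would force $s \ge (2p+1)/3$, contradicting the standing hypothesis $s \le 2p/3$. The main subtlety I expect is precisely this dovetailing: the Lucas-based characterization of the binomial nonvanishing and Kneser's inequality have to be balanced against the quantitative threshold $s \le 2p/3$, which is calibrated exactly to exclude the symmetric bad case $|A|, |B| > p-s$; everything else — the translation $A+B=S_d$, the two sum/product inequalities, and the prime endgame — is routine.
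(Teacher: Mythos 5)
Your proposal is correct and follows essentially the same route as the paper: reduce to Theorem~\ref{thm:main} by bounding the smaller of $|A|,|B|$ via a Cauchy--Davenport-type inequality $|A|+|B|\leq|A+B|+1=\frac{q-1}{d}+1$, which together with $\frac{q-1}{d}\leq\frac{2p}{3}$ forces the binomial coefficient to be nonzero mod $p$. The only cosmetic differences are that you invoke Kneser's theorem (with a correct triviality check on the stabilizer of $S_d$) where the paper uses K\'arolyi's version of Cauchy--Davenport (Corollary~\ref{cor:CD}), and that you run a two-case Lucas analysis where the paper simply observes that $|A|-1+\frac{q-1}{d}<\frac{3(q-1)}{2d}\leq p$, so the relevant binomial coefficient has a single base-$p$ digit on top.
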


We remark that Corollary~\ref{cor:a+b} does not extend to all pairs $(d,q)$ with $d \geq 2$ and $d \mid (q-1)$; see Example~\ref{ex:ex1}. Nevertheless, if $A+B=S_d$ and $|A||B|>|S_d|$, we can still say something about the structure of $A$ and $B$; see Proposition~\ref{prop:structure} and Remark~\ref{rem:structure}.

Next, we proceed with the discussion on the possibility of $S_d=A+A$. Before stating our contributions, we introduce the following two definitions for convenience. Let $q=p^n$ and let $d \mid (q-1)$ with $2 \leq d<q-1$. Write the base-$p$ expansion of $\frac{q-1}{d}$ as $$\frac{q-1}{d}=\sum_{j=0}^{n} e_jp^j,$$ where $0 \leq e_j \leq p-1$. For a positive real number $\delta$, we say a pair $(d,q)$ is \emph{$\delta$-good} if $$e_j \leq \lfloor (1-\delta)(p-1) \rfloor$$ for each $0 \leq j<(n+1)/2$. Furthermore, we say a pair $(d,q)$ is $\emph{good}$ if one of the following conditions holds:
\begin{itemize}
    \item $\frac{q-1}{d}\leq \frac{2p}{3}$;
    \item $e_j \leq \frac{p-1}{2}$ for each $0 \leq j<n/2$;
    \item $n=2r+1$ with $r \geq 1$, $d\leq 2p-2$, and $e_r\leq p-1-\lceil \sqrt{p} \rceil/2$; 
    \item $n=2r$, $d \leq 2p^2$, and $e_{r-1} \leq \frac{p-3}{2}$.
    \end{itemize}
Note that if a pair $(d,q)$ is $\frac{1}{2}$-good, then it is good. 

Shkredov \cite{S14} showed that $S_2$ cannot be written as $A+A$ when $q=p$ is a prime such that $p>3$. However, as remarked by Shparlinski \cite{S13} (see also Remark~\ref{rmk:A+A}), his method does not seem to extend to other subgroups. Instead, Theorem~\ref{thm:main}, together with other tools and observations, allows us to extend Shkredov's result to other subgroups. 

\begin{thm}\label{thm:A+A}
Let $d \mid (q-1)$ with $2 \leq d<q-1$. If the pair $(d,q)$ is good, then there is no additive decomposition $S_d=A+A$ for any $A \subset \F_q$.
\end{thm}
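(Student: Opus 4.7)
The plan is to combine Theorem~\ref{thm:main} (and Corollary~\ref{cor:a+b}) with the elementary sumset inequality $|A+A| \leq \binom{|A|+1}{2}$ to derive a contradiction. Write $N = (q-1)/d = |S_d|$; note $N \geq 2$ since $d \mid (q-1)$ and $d < q-1$.

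Assume for contradiction that $A + A = S_d$ for some $A \subset \F_q$. Two observations drive the argument. First, $0 \notin S_d = A+A$ forces $A \cap (-A) = \emptyset$, for otherwise $a + (-a) = 0$ would lie in $A+A$. Second, the trivial sumset bound gives
\[
N = |A+A| \leq \binom{|A|+1}{2} = \frac{|A|(|A|+1)}{2},
\]
so $|A|(|A|+1) \geq 2N$. The goal is to prove the complementary inequality $|A|^2 \leq N$; combined with the preceding display, this yields $|A|(|A|+1) \geq 2|A|^2$, hence $|A| \leq 1$, contradicting $N \geq 2$.

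The inequality $|A|^2 \leq N$ is precisely what Theorem~\ref{thm:main} delivers with $B = A$, once $A \cap (-A) = \emptyset$ annihilates the error term. The theorem's hypothesis reads
\[
\binom{|A| - 1 + N}{N} \not\equiv 0 \pmod{p},
\]
which by Kummer's theorem is equivalent to no carries arising in the base-$p$ addition of $|A| - 1$ and $N$. Case~1 of the ``good'' definition, $N \leq 2p/3$, is handled directly by Corollary~\ref{cor:a+b} with $B = A$, bypassing any binomial-coefficient check. For Cases~2--4, the ``good'' digit conditions on $N = \sum_j e_j p^j$ are tailored precisely so that, for any $|A|$ compatible with the sumset constraints, the base-$p$ digits of $|A| - 1$ line up with digits $e_j$ small enough to prevent carries.

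The main obstacle is the case-by-case digit verification in Cases~2--4. There, one must combine an a priori upper bound on $|A|$ (derived from the sumset structure, possibly supplemented by subfield-exclusion arguments or Pl\"unnecke--Ruzsa-type inequalities in $\F_q$, since the standard bound $|A+A| \geq 2|A|-1$ need not hold in $\F_q$) with the explicit digit restrictions: Condition~2 restricts the low digits of $N$; Condition~3 restricts the middle digit of $N$ for odd-degree extensions together with $d \leq 2p-2$; and Condition~4 restricts the digit just below the middle for even-degree extensions together with $d \leq 2p^2$. In each regime, the digit-matching argument confirms that $|A|-1+N$ is carry-free in base $p$, so Theorem~\ref{thm:main} applies and the main reduction above yields the desired contradiction.
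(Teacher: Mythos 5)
There is a genuine gap at the heart of your argument: you propose to apply Theorem~\ref{thm:main} with $B=A$, which requires $\binom{|A|-1+N}{N}\not\equiv 0 \pmod p$, and you assert that the ``good'' digit conditions guarantee this ``for any $|A|$ compatible with the sumset constraints.'' That assertion is false. The good conditions constrain only the digits $e_j$ of $N=(q-1)/d$; they say nothing about the digits of $|A|-1$, which can be as large as $p-1$. For instance, under Condition~2 ($e_j\le\frac{p-1}{2}$ for $j<n/2$), if some digit $c_j$ of $|A|-1$ exceeds $\frac{p-1}{2}$ then $c_j+e_j$ can exceed $p-1$, a carry occurs, and by Lucas/Kummer the binomial coefficient vanishes mod $p$. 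So the inequality $|A|^2\le N$ is simply not available by a direct application of Theorem~\ref{thm:main} to $A$ itself, and no amount of ``digit matching'' on $N$ alone can rescue it.

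The paper's proof gets around exactly this obstruction with an idea your proposal is missing: instead of applying the theorem to the pair $(A,A)$, one applies Corollary~\ref{cor:cora+b} to a pair $(A',A)$, where $A'\subset A$ is a subset whose cardinality is \emph{chosen} so that $|A'|-1$ has a prescribed, carry-free base-$p$ expansion against $N$ (e.g.\ $|A'|-1=c_kp^k$ with $c_k\le\frac{p-1}{2}$, or $|A'|-1=\frac{p^{k+1}-1}{2}$, where $p^k\le|A|-1<p^{k+1}$ and $k<n/2$ via Lemma~\ref{lem:ub}). Since $A'+A\subset S_d$, this yields $|A||A'|\le N$, and the whole point of the case analysis for Conditions~2--4 is to show one can always take $|A'|>\frac{|A|+1}{2}$ (with an extra argument to exclude equality in one boundary case), contradicting $N=|A+A|\le\frac{|A|(|A|+1)}{2}$. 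Your first reduction (the symmetry bound on $|A+A|$, the treatment of Condition~1 via Corollary~\ref{cor:a+b}, and the observation $A\cap(-A)=\emptyset$) matches the paper, but without the passage to a subset $A'$ of controlled size the argument for Conditions~2--4 does not go through.
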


We remark that when $q$ is a proper prime power, we do need some extra assumptions on the pair $(d,q)$ for Theorem~\ref{thm:A+A} to hold; see Example~\ref{ex:ex1} for a counterexample. In certain cases, it is easy to verify that a pair $(d,q)$ is good without actually expanding $\frac{q-1}{d}$ in base-$p$. The following corollary describes a few such cases and is thus an immediate consequence of Theorem~\ref{thm:A+A}. 

\begin{cor}\label{cor:A+A}
Let $q=p^n$ and $d \mid (q-1)$ with $2 \leq d<q-1$. Let $k$ be the order of $p$ modulo $d$. Then there is no additive decomposition $S_d=A+A$ for any $A \subset \F_q$, provided that one of the following conditions holds:
\begin{itemize}
    \item $d \mid (p-1)$ (in particular, if $d=2$ or $q$ is a prime);
    \item $\frac{p^k-1}{d}\leq \frac{p-1}{2}$;
    \item $2k \mid n$ and $d \leq 2p^2$.
\end{itemize}
\end{cor}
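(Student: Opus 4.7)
The plan is to verify that in each of the three listed cases the pair $(d,q)$ is good, at which point Theorem~\ref{thm:A+A} applies directly. Characteristic two is disposed of at the outset: if $p=2$, then $a+a=0$ for every $a\in A$, so $0\in A+A$ whenever $A$ is nonempty, while $0\notin S_d$; hence $S_d=A+A$ is impossible. I therefore assume $p$ is odd throughout.

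The key preparatory computation is an explicit base-$p$ expansion of $\frac{q-1}{d}$. Since $k$ is the order of $p$ modulo $d$, we have $k\mid n$, and I would write $n=k\ell$ and $m_0=(p^k-1)/d<p^k$, with base-$p$ expansion $m_0=\sum_{i=0}^{k-1}a_ip^i$. The factorization
\[
\frac{p^n-1}{d}=m_0\sum_{j=0}^{\ell-1}p^{jk}=\sum_{j=0}^{\ell-1}\sum_{i=0}^{k-1}a_ip^{i+jk}
\]
shows that the digits of $(p^n-1)/d$ form a $k$-periodic block $a_0,a_1,\ldots,a_{k-1}$ repeated $\ell$ times; that is, $e_{i+jk}=a_i$ for $0\le i<k$, $0\le j<\ell$, and $e_t=0$ otherwise.

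With this in hand, each case is short. For (a), $d\mid(p-1)$ forces $k=1$ and every digit equals $(p-1)/d\le(p-1)/2$, so the second clause of the definition of good applies. For (b), the bound $m_0\le(p-1)/2<p$ forces $a_0=m_0$ and $a_i=0$ for $1\le i<k$, so every digit of $(p^n-1)/d$ is $0$ or $m_0\le(p-1)/2$, and the second clause again applies. For (c), $2k\mid n$ means $n=2r$ with $r$ a multiple of $k$, whence $r-1\equiv k-1\pmod{k}$ and $e_{r-1}=a_{k-1}$; if $k=1$ we revert to (a), and otherwise $k\ge 2$ and $d\ge 3$ (since $d=2$ would force $k=1$ for odd $p$). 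The inequality $a_{k-1}p^{k-1}\le m_0=(p^k-1)/d$ then gives $a_{k-1}<p/d\le p/3\le (p-1)/2$, and integrality forces $a_{k-1}\le(p-3)/2$, verifying the fourth clause of the definition of good (whose hypothesis $d\le 2p^2$ is exactly what case (c) supplies).

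The main obstacle lies in case (c). One might naively hope to use the second clause of good via the $k$-periodic structure and $2k\mid n$, but this would require \emph{every} digit $a_i$ of $m_0$ to be at most $(p-1)/2$, a constraint not implied by $d\le 2p^2$. The correct route is the fourth clause, which only constrains the middle digit $e_{r-1}$; the observation that $r-1$ lies at position $k-1$ in the periodic block, combined with the reduction to $k\ge 2$ (and hence $d\ge 3$), makes the bound on the leading digit $a_{k-1}$ come out cleanly by elementary divisibility.
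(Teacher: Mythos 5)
Your proof is correct and follows exactly the route the paper intends: the corollary is presented as an immediate consequence of Theorem~\ref{thm:A+A}, and you supply the $k$-periodic base-$p$ digit computation verifying that each of the three cases yields a good pair (second clause for (a) and (b), fourth clause for (c) via $e_{r-1}=a_{k-1}<p/d$). Your separate disposal of characteristic two (where $0\in A+A$ but $0\notin S_d$) is a worthwhile addition rather than mere caution, since for $p=2$ case (c) can produce pairs that are not good (e.g.\ $(d,q)=(3,16)$), so the reduction to Theorem~\ref{thm:A+A} alone would not quite cover that case.
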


Finally, we turn to the discussion of the {\em ternary decomposition}, that is, decomposing $S_d$ into the sum of three sets. In his paper, S\'{a}rk\"{o}zy \cite{S12} confirmed the ternary version of his conjecture. Recently, Chen and Yan \cite{CY21} showed that for any prime $p$, there is no additive decomposition $A+B+C=S_2$ with $|A|,|B|,|C| \geq 2$, refining S\'{a}rk\"{o}zy's result \cite{S12}. Wu and She \cite{WS23} showed that for a prime $p> 184291$ with $p \equiv 1 \pmod 3$, there is no nontrivial additive decomposition of $S_3$. More generally, Wu, Wei, and Li \cite[Theorem 1.2]{WWL24} showed that if $d \geq 2$ and $p \equiv 1 \pmod d$ is a prime such that $p>217^2d^4$, then $S_d$ has no nontrivial ternary additive decomposition.  We refer to \cite{GS15, GS17} for a general discussion on (ternary)-irreducible subsets of $\F_p$. On the other hand, Shkredov \cite{S20} showed that any small multiplicative subgroup is not a sumset in the sense that if $G$ is a multiplicative subgroup of $\F_p$ with $1 \ll_{\epsilon} |G| \leq p^{2/3-\epsilon}$, then there is no additive decomposition $G=A+B$ with $A, B \subset \F_p$ and $|A|,|B| \geq 2$. 

Our next result shows that as long as $G$ is a proper multiplicative subgroup of some prime field $\F_p$ with $|G| \gg 1$, then $G$ cannot be written as $A+B+C$ nontrivially. In particular, this confirms the ternary version of the generalized S\'{a}rk\"{o}zy's conjecture over prime fields in a stronger form, and refines the recent result of Wu, Wei, and Li \cite{WWL24}.

\begin{thm}\label{thm:prime3}
There exists a constant $M>0$, such that whenever $G$ is a proper multiplicative subgroup of $\F_p$ with $|G|>M$ and $p$ a prime, there is no additive decomposition $G=A+B+C$ with $A,B,C \subset \F_p$ and $|A|,|B|,|C| \geq 2$.
\end{thm}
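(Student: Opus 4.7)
The plan is to assume for contradiction that $G = A + B + C$ with $|A|, |B|, |C| \ge 2$, extract rigid structural constraints via Corollary~\ref{cor:a+b}, and then split on the size of $|G|$.

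Since $|G| \le (p-1)/2 < 2p/3$, Corollary~\ref{cor:a+b} applies to each of the pairings $G = A + (B+C)$, $G = B + (A+C)$, $G = C + (A+B)$, yielding $|A|\cdot|B+C| = |B|\cdot|A+C| = |C|\cdot|A+B| = |G|$ with all sums distinct in each pairing. Combining with the Plünnecke--Ruzsa inequality $|A+B+C|\cdot|B| \le |A+B|\cdot|B+C|$, together with $|A+B| \le |A||B|$ and its cyclic analogues, we deduce
\[
|A||B||C| = |G|, \qquad |A+B| = |A||B|, \qquad |A+C| = |A||C|, \qquad |B+C| = |B||C|;
\]
equivalently, the difference sets $(A-A)$, $(B-B)$, $(C-C)$ pairwise intersect only at $0$.

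The case $d = 2$ (quadratic residues) was handled by Chen and Yan~\cite{CY21}, so assume $d \ge 3$, i.e.\ $|G| \le (p-1)/3$. Fix a small $\epsilon > 0$. If $|G| \le p^{2/3-\epsilon}$ and $|G|$ exceeds the implicit constant in Shkredov~\cite{S20}, then the nontrivial binary decomposition $G = A + (B+C)$ contradicts \cite{S20}. Otherwise $|G| > p^{2/3-\epsilon}$; relabel so $|A|$ is minimal (hence $|A| \le |G|^{1/3}$). From the containment $B + C \subset \bigcap_{a \in A}(G - a)$ and the choice of two distinct elements $a_1, a_2 \in A$,
\[
|G|^{2/3} \le |B||C| = |B+C| \le |G \cap (G + (a_1 - a_2))|.
\]
Standard Weil/Jacobi-sum estimates give $|G \cap (G+t)| \le |G|^2/p + O(\sqrt{p})$ for $t \ne 0$. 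Since $|G| \le (p-1)/3$, the right-hand side is substantially smaller than $|G|^{2/3}$ when $|G|$ lies sufficiently far below $p^{3/4}$, producing a contradiction in that range. The remaining narrow window $|G| \asymp p^{3/4}$ is handled by invoking sharper intersection bounds for multiplicative subgroups---e.g.\ Stepanov-type estimates or sum-product bounds of Bourgain--Glibichuk--Konyagin type---giving $|G \cap (G+t)| \le |G|^{1-\delta}$ with $\delta > 1/3$ in this regime, contradicting $|B||C| \ge |G|^{2/3}$.

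The main obstacle is precisely the intermediate range $|G| \asymp p^{3/4}$, where the elementary Weil bound is borderline and sharper estimates on $|G \cap (G+t)|$ for multiplicative subgroups of intermediate size are required; this is the technical heart of the argument and the principal advance beyond the result of Wu--Wei--Li~\cite{WWL24}, which required $p > 217^2 d^4$.
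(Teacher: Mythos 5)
Your opening reductions are correct and in fact slightly sharper than what the paper extracts at the corresponding stage: applying Corollary~\ref{cor:a+b} to the three pairings (legitimate, since $|G|\le (p-1)/2<2p/3$) and combining with the Pl\"unnecke--Ruzsa bound $|A+B+C|\,|B|\le |A+B|\,|B+C|$ does give $|A||B||C|=|G|$ with all three binary sumsets direct. The case $|G|\le p^{2/3-\epsilon}$ via Shkredov's theorem is also fine. The gap is in the complementary range $|G|>p^{2/3-\epsilon}$: the Weil-bound argument never produces the contradiction you claim, for \emph{any} size of $|G|$. You need $|G|^{2/3}>|G|^2/p+C\sqrt{p}$, but the condition $|G|^{2/3}>C\sqrt{p}$ forces $|G|\gg p^{3/4}$, while $|G|^{2/3}>|G|^2/p$ forces $|G|<p^{3/4}$; these are incompatible, and a short optimization shows the constants do not rescue a window near $p^{3/4}$ (the maximum of $c^{2/3}-c^2$ is about $0.385$, below the Weil error constant). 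So the inequality goes the wrong way throughout $(p^{2/3-\epsilon},(p-1)/2]$, not just ``near $p^{3/4}$'' as you suggest, and the patch you invoke --- Stepanov-type bounds $|G\cap(G+t)|\le|G|^{1-\delta}$ with $\delta>1/3$ --- is not available for subgroups of size around $p^{3/4}$, where $|G\cap(G+t)|$ is genuinely of order $|G|^2/p\asymp|G|^{2/3}$ on average.

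The missing ingredient is Shparlinski's Theorem~\ref{thm:lball} (\cite[Theorem 7.1]{S13}), which for any decomposition $G=X+Y$ with $|X|,|Y|\ge 2$ gives $\min\{|X|,|Y|\}=|G|^{1/2+o(1)}$; its proof rests on bounding $\bigl|\bigcap_{a\in A}(G-a)\bigr|$ over the \emph{whole} set $A$, which is far stronger than intersecting only two translates as you do. Applied to the three pairings it yields $|A|,|B|,|C|\gg|G|^{1/2+o(1)}$, and combined with your own identity $|A||B||C|=|G|$ (or, as in the paper, with Lemma~\ref{lem:addcomb} and the upper bounds $|A+B||C|,|B+C||A|,|C+A||B|\ll|G|$) this gives $|G|\ge|G|^{3/2+o(1)}$, a contradiction for $|G|$ large. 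With that substitution your argument closes, and neither the case split on $|G|$ nor the appeal to \cite{S20} and to Chen--Yan is needed.
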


Theorem~\ref{thm:prime3} fails to extend to general finite fields $\F_q$; a simple counterexample can be found in Example~\ref{ex:ex2}. Instead, we show that a large multiplicative subgroup $S_d$ cannot be written as $A+B+C$ nontrivially. 

\begin{thm}\label{thm:dq}
Let $\epsilon>0$. There is a constant $Q=Q(\epsilon)$, such that for each prime power $q>Q$ and a divisor $d$ of $q-1$ with $2 \leq d \leq q^{1/10-\epsilon}$, there is no additive decomposition $S_d=A+B+C$ with $A,B,C \subset \F_q$ and $|A|,|B|,|C| \geq 2$. 
\end{thm}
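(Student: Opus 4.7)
The plan is to argue by contradiction. Assume $S_d = A + B + C$ with $|A|, |B|, |C| \geq 2$; after relabelling we may take $|A| \leq |B| \leq |C|$, so that $|A||B||C| \geq |A+B+C| = (q-1)/d$ and in particular $|A|^3 \leq (q-1)/d$. The aim is to derive a numerical contradiction for $q > Q(\epsilon)$ when $d \leq q^{1/10-\epsilon}$.

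First, I would turn the ternary decomposition into the three binary ones $S_d = A + (B+C) = B + (A+C) = C + (A+B)$ and apply Theorem~\ref{thm:main} to each. The binomial coefficient hypothesis is, by Kummer's theorem, a no-carry condition when adding the cardinality of the first factor (minus one) to $(q-1)/d$ in base $p$; the hypothesis $d \leq q^{1/10-\epsilon}$ provides enough flexibility in the base-$p$ expansion of $(q-1)/d$ that for at least one of the three orderings the condition can be verified, possibly after passing through an auxiliary enlargement of the smallest set. Granting the conditions, one obtains the pairwise bounds
\[
|A|\bigl(|B+C|-1\bigr) \leq \frac{q-1}{d}, \quad |B|\bigl(|A+C|-1\bigr) \leq \frac{q-1}{d}, \quad |C|\bigl(|A+B|-1\bigr) \leq \frac{q-1}{d}.
\]

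Next, I would combine these with the Pl\"unnecke--Ruzsa inequality $|A+B+C|\cdot|A| \leq |A+B|\cdot|A+C|$ and its two cyclic variants. Multiplying the three Pl\"unnecke--Ruzsa inequalities and substituting the bounds above yields $|A||B||C| \leq (q-1)/d \cdot (1 + o(1))$, which, together with the lower bound $|A||B||C| \geq (q-1)/d$, forces $A+B+C$ to be close to a direct sum: the representation function $r(s) := |\{(a,b,c) \in A \times B \times C : a+b+c = s\}|$ satisfies $\sum_{s \in S_d} r(s) = (q-1)/d + O(|A|)$ while $r(s) \geq 1$ on all of $S_d$, and the pairwise sumsets satisfy $|B+C| \sim |B||C|$, etc.

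The third step is an application of the Weil bound. The inclusion $B + C \subset \bigcap_{a \in A}(S_d - a)$ follows from $A + (B+C) = S_d$. Expanding $\mathbbm{1}_{S_d}(y) = \tfrac{1}{d}\sum_{j=0}^{d-1}\chi^j(y)$ for a character $\chi$ of $\F_q^*$ of order $d$ and applying the Weil bound to each nontrivial resulting character sum gives
\[
\Bigl|\bigcap_{a \in A}(S_d - a)\Bigr| \leq \frac{q}{d^{|A|}} + (|A|-1)\sqrt{q},
\]
and two analogous inclusions hold for $A+B$ via $\bigcap_{c \in C}(S_d - c)$ and for $A+C$ via $\bigcap_{b \in B}(S_d - b)$. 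Combined with the near-direct-sum lower bounds $|B+C| \sim |B||C| = (q-1)/(d|A|)$, etc., from Step~2, these Weil estimates force each of $|A|$, $|B|$, $|C|$ to be $\Omega(q^{1/4}/\sqrt{d})$.

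Finally, the lower bounds $|A|, |B|, |C| \gg q^{1/4}/\sqrt{d}$ are fed back into the pairwise estimates of Step~1 and combined with the near-direct-sum constraint $|A||B||C| \leq (1+o(1))(q-1)/d$. Careful balancing of these competing inequalities, in which the hypothesis $d \leq q^{1/10-\epsilon}$ enters decisively (through the ratio of the Weil main term $q/d^{|A|}$ to the error term $|A|\sqrt{q}$, together with the cubic upper bound $|A|^3 \leq (q-1)/d$), yields a numerical contradiction for $q > Q(\epsilon)$.

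The main obstacle is twofold. The first difficulty is Step~1: verifying the binomial coefficient hypothesis of Theorem~\ref{thm:main} in the proper-prime-power setting requires tracking the base-$p$ digits of $(q-1)/d$, and one may need to invoke a slight strengthening of Theorem~\ref{thm:main} or pass through an auxiliary set to force the no-carry condition for at least one of the three orientations. The second, and more delicate, difficulty is the final optimization producing the specific exponent $1/10$; extracting it requires carefully tracking the lower-order error terms through the Pl\"unnecke--Ruzsa step and the Weil step, and I expect this balancing to be where the bulk of the technical computation lies.
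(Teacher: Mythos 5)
There is a genuine gap, and it sits exactly where you flag your "main obstacle": Step 1 cannot be carried out as described. Applying Theorem~\ref{thm:main} (or Corollary~\ref{cor:cora+b}) to $A+(B+C)\subset S_d$ requires $\binom{|A|-1+\frac{q-1}{d}}{\frac{q-1}{d}}\not\equiv 0\pmod p$, and the hypothesis $d\leq q^{1/10-\epsilon}$ gives you no control whatsoever over the base-$p$ digits of $\frac{q-1}{d}$: a small index $d$ can still produce an expansion of $\frac{q-1}{d}$ whose low digits are all close to $p-1$, in which case no choice of ordering and no "auxiliary enlargement" of a factor rescues the no-carry condition (shrinking a factor, as in the proof of Theorem~\ref{thm:A+A}, costs a multiplicative factor that must be quantified, which is precisely what the $\delta$-good hypothesis of Theorem~\ref{thm:deltagood} is for). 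The paper sidesteps this entirely: its proof of Theorem~\ref{thm:dq} never invokes Theorem~\ref{thm:main}. It uses only the unconditional character-sum bound of Lemma~\ref{lem:ub}, giving $|A||B+C|<q$ (and cyclically), which is weaker by a factor of $d$ than your claimed $|A||B+C|\leq\frac{q-1}{d}$ but needs no digit condition. The version of the argument that does use Corollary~\ref{cor:a+b} is Theorem~\ref{thm:deltagood}, and it both assumes $\delta$-goodness and is rewarded with the better exponent $1/4$.

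The second problem is that even granting your Step 1, the lower bound you extract in Step 3 is too weak to produce a contradiction, and you never perform the final balancing. From $|B+C|\ll|A|\sqrt{q}$, near-distinctness $|B+C|\sim|B||C|$, and $|A||B||C|\geq\frac{q-1}{d}$ you get $|A|\gg q^{1/4}/\sqrt{d}$; feeding $|A||B||C|\gg q^{3/4}/d^{3/2}$ into $|A||B||C|\ll q/d$ yields only $1\ll q^{1/4}d^{1/2}$, which is vacuous. The paper's proof instead gets the stronger bound $\min\{|A|,|B|,|C|\}\gg\sqrt{q}/d$ from Lemma~\ref{lem:maxmin}, whose key input is Shparlinski's asymmetric estimate $\max\{|A|,|B+C|\}\ll q^{1/2}$ (an absolute upper bound on the larger factor, not a bound of one factor in terms of the other, which is what your Weil computation produces). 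The exponent $1/10$ then comes out of the single chain
$$\frac{q^2}{d^2}\cdot\frac{q^{3/2}}{d^3}\ll|A+B+C|^2|A||B||C|\leq(|A+B||C|)(|B+C||A|)(|C+A||B|)\ll q^3,$$
i.e.\ $q^{1/2}\ll d^5$, contradicting $d\leq q^{1/10-\epsilon}$ for large $q$. So the correct repair is to replace your Step 1 by Lemma~\ref{lem:ub} and your Step 3 by Lemma~\ref{lem:maxmin}; with your current ingredients the contradiction does not close.
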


In particular, Theorem~\ref{thm:dq} implies the following corollary immediately, which confirms the ternary version of the generalized S\'{a}rk\"{o}zy's conjecture.

\begin{cor}\label{cor:dfixed}
There is an absolute constant $M$, such that for each pair $(d,q)$ such that $q \equiv 1 \pmod d$ and $q>Md^{11}$, there is no additive decomposition $S_d=A+B+C$ with $A,B,C \subset \F_q$ and $|A|,|B|,|C| \geq 2$. In particular, if $d$ is fixed, and $q \equiv 1 \pmod d$ is sufficiently large, then there is no nontrivial ternary decomposition of $S_d$.
\end{cor}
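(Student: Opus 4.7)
The plan is to deduce Corollary~\ref{cor:dfixed} as a direct parameter-chasing consequence of Theorem~\ref{thm:dq}. The only substantive content is choosing a single fixed $\epsilon$ such that the bound $q > Md^{11}$ simultaneously forces $q$ to exceed the ineffective threshold $Q(\epsilon)$ and $d$ to fall within the admissible range $d \leq q^{1/10-\epsilon}$.

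Concretely, I would first fix $\epsilon = \tfrac{1}{220}$, so that $\tfrac{1}{10} - \epsilon = \tfrac{21}{220}$, and let $Q = Q(\epsilon)$ be the constant produced by Theorem~\ref{thm:dq}. I would then set the absolute constant to be $M = \max\bigl(1,\, Q/2^{11}\bigr)$. Given any pair $(d,q)$ with $d \geq 2$, $q \equiv 1 \pmod d$, and $q > Md^{11}$, the lower bound $d \geq 2$ gives $q > M \cdot 2^{11} \geq Q$, so the size hypothesis of Theorem~\ref{thm:dq} is satisfied.

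Next I would verify the range condition on $d$. From $q > Md^{11} \geq d^{11}$ we get $d < q^{1/11} = q^{20/220}$, which is strictly less than $q^{21/220} = q^{1/10-\epsilon}$ for any $q > 1$; hence $2 \leq d \leq q^{1/10-\epsilon}$. Theorem~\ref{thm:dq} then applies and yields that there is no decomposition $S_d = A+B+C$ with $A,B,C \subset \F_q$ and $|A|,|B|,|C| \geq 2$, which is precisely the first assertion of the corollary. The second assertion is immediate: if $d$ is held fixed, then $q > Md^{11}$ is eventually satisfied for all sufficiently large $q \equiv 1 \pmod d$, so the first part of the corollary supplies the conclusion.

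There is no real obstacle here beyond bookkeeping; the only thing to be careful about is that the chosen $\epsilon$ lies strictly below $\tfrac{1}{110}$ (the breakeven exponent when comparing $(q/M)^{1/11}$ with $q^{1/10-\epsilon}$) and that $M$ is chosen large enough to absorb $Q(\epsilon)$ once the factor $d^{11} \geq 2^{11}$ has been used. Any $\epsilon \in (0, 1/110)$ works with a correspondingly adjusted $M$, so the exponent $11$ in the corollary is really dictated by the exponent $\tfrac{1}{10}$ in Theorem~\ref{thm:dq}.
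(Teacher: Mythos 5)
Your proposal is correct and is exactly the parameter-chasing deduction from Theorem~\ref{thm:dq} that the paper intends (the paper states the corollary follows ``immediately'' and gives no further detail). The choice $\epsilon=\tfrac{1}{220}$, the verification that $q>Md^{11}$ with $d\geq 2$ forces both $q>Q(\epsilon)$ and $d<q^{1/11}\leq q^{1/10-\epsilon}$, and the observation that any $\epsilon<\tfrac{1}{110}$ works are all accurate.
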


Assuming that the base-$p$ representation of $\frac{q-1}{d}$ behaves ``nicely", we can further improve the range of $d$ from $q^{1/10}$ in Theorem~\ref{thm:dq} to roughly $q^{1/4}$.

\begin{thm}\label{thm:deltagood}
Let $\epsilon>0$. There are constants $Q=Q(\epsilon)$ and $P=P(\epsilon)$, such that for each prime power $q>Q$ with $p>P$, and a divisor $d$ of $q-1$ such that the pair $(d,q)$ is $\delta$-good with $\delta>0$ and $2 \leq d \leq q^{1/4-\epsilon} \delta^{3/2}$, there is no nontrivial ternary decomposition of $S_d$.
\end{thm}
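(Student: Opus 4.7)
Proof plan. Suppose for contradiction that $S_d = A + B + C$ with $|A|, |B|, |C| \geq 2$, and WLOG $|A| \leq |B| \leq |C|$. By replacing $A$ with $A - a_0$ and $C$ with $C + a_0$ for some $a_0 \in A$ we may assume $0 \in A$, so $B + C \subset A + B + C = S_d$; also the trivial count $|A||B||C| \geq |A+B+C| = N$ holds, where $N := (q-1)/d$.

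The heart of the proof applies Theorem~\ref{thm:main} to each of the three binary decompositions $A + (B+C) = S_d$, $B + (A+C) = S_d$, and $C + (A+B) = S_d$ (the latter two after analogous translations). Each needs the binomial nonvanishing $\binom{m-1+N}{N} \not\equiv 0 \pmod p$ for $m \in \{|A|, |B|, |C|\}$, which by Kummer's theorem is equivalent to the absence of base-$p$ carries in $m-1+N$. The $\delta$-good hypothesis supplies the necessary room: each lower-half digit $e_j$ ($j < (n+1)/2$) of $N$ satisfies $e_j \leq (1-\delta)(p-1)$, so any $m-1$ whose base-$p$ digits in the lower half are $\leq \delta(p-1)$ (and whose upper-half digits vanish) causes no carry. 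For the verification I would combine the size constraint $d \leq q^{1/4-\epsilon}\delta^{3/2}$ (so $N \geq q^{3/4+\epsilon}\delta^{-3/2}$) with the a priori bounds on $|A|, |B|, |C|$ that fall out of the next step; if a given size has an unfavorable digit, one passes to a subset of comparable cardinality whose size has a nicer base-$p$ expansion. The $\delta^{3/2}$ scaling reflects the combined digit-room loss across the three binomial conditions, each effectively costing a $\delta^{1/2}$ factor.

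Given the three binomial conditions, and using that $0 \notin S_d$ forces $A \cap (-(B+C)) = \emptyset$ (and cyclically), Theorem~\ref{thm:main} yields $|A||B+C| \leq N$, $|B||A+C| \leq N$, and $|C||A+B| \leq N$. The Ruzsa triangle inequality $|A+C|\cdot|B| \leq |A+B|\cdot|B+C|$ together with $|A+B+C| = N \leq |A+B|\cdot|C|$ and the three upper bounds above forces all of them to be equalities, and in particular $|A||B||C| = N$: the trilinear sum map $A \times B \times C \to S_d$ is a bijection, so $|A+B| = |A||B|$, $|A+C| = |A||C|$, $|B+C| = |B||C|$. Consequently $S_d$ admits the partition $S_d = \bigsqcup_{a \in A} (a + (B+C))$ into $|A|$ blocks of size $|B||C|$, and symmetric partitions via $B$ and $C$. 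This gives $|S_d \cap (S_d + t)| \geq |B||C|$ for every nonzero $t \in A - A$, and analogously for $t \in B - B$ and $t \in C - C$.

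The final contradiction uses Weil's bound on multiplicative character sums: for $t \neq 0$, $|S_d \cap (S_d + t)| \leq N^2/q + O(\sqrt{q})$. Under $d \lesssim q^{1/4}$ the first term dominates, yielding $|B||C| \lesssim q/d^2$, and symmetrically $|A||C|, |A||B| \lesssim q/d^2$. Multiplying, $(|A||B||C|)^2 \lesssim q^3/d^6$, so $N^2 \lesssim q^3/d^6$, giving $d^4 \lesssim q$; the refined sharpening to $d \lesssim q^{1/4}\delta^{3/2}$ comes from threading the $\delta$-dependence through the constants in the Weil estimate and the digit-room thresholds of the binomial step. The main obstacle I anticipate is precisely the simultaneous verification of the three binomial conditions with the sharp $\delta^{3/2}$ constant, since $|C|$ (the largest) interacts most delicately with the $\delta$-good digits of $N$; this requires careful digit analysis and the subset trick to bring all three sizes into the "safe" range without losing more than $\delta^{1/2}$ per application.
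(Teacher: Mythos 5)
Your proposal correctly identifies the two main ingredients (the binomial non‑vanishing via the $\delta$‑good digits plus a subset trick, and Ruzsa's inequality $|A+B+C|^2\le|A+B||B+C||C+A|$), but the endgame is structurally wrong: it derives an \emph{upper} bound on $d$, which cannot contradict the hypothesis $d\le q^{1/4-\epsilon}\delta^{3/2}$, itself an upper bound. Concretely, your final chain $|B||C|,\,|A||C|,\,|A||B|\lesssim q/d^2$ (from Weil applied to $|S_d\cap(S_d+t)|$) together with $|A||B||C|=N=(q-1)/d$ gives $N^2\lesssim q^3/d^6$, i.e.\ $d^4\lesssim q$ — a statement entirely consistent with the assumption $d\le q^{1/4-\epsilon}\delta^{3/2}$. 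No contradiction follows. What is actually needed is a \emph{lower} bound $d\gg q^{1/4}\delta^{3/2}$, and for that you need lower bounds on the individual sizes. The paper gets these from Shparlinski's estimate (Lemma~\ref{lem:maxmin}): applying it to each of $A+(B+C)$, $B+(C+A)$, $C+(A+B)$ gives $|A|,|B|,|C|\gg\sqrt{q}/d$, hence $|A||B||C|\gg q^{3/2}/d^3$; comparing with the upper bound $|A||B||C|\ll q/(d\delta^3)$ coming from Ruzsa plus the three applications of Corollary~\ref{cor:cora+b} yields $d\gg q^{1/4}\delta^{3/2}$, the desired contradiction. Your Weil/shifted‑intersection step is not an adequate substitute for this lower bound (it only yields $|A|\gtrsim d$, far weaker than $\sqrt{q}/d$ in the relevant range), and Lemma~\ref{lem:maxmin} is also what supplies the a priori bound $|A|,|B|,|C|\le M\sqrt{q}<p^{(n+1)/2}$ (this is where $P(\epsilon)$ enters) that your digit analysis tacitly requires.

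A second, related problem: your ``forced equalities / trilinear bijection'' step needs the clean inequalities $|A||B+C|\le N$, etc., i.e.\ the binomial condition for $|A|$, $|B|$, $|C|$ themselves. The $\delta$‑good hypothesis only guarantees this for subsets $A'\subset A$ with $|A'|\gg\delta|A|$ (each application costs a factor $\delta$, not $\delta^{1/2}$), so all you get is $|A||B+C|\ll q/(d\delta)$ and cyclically. With these constant‑factor losses the equality case collapses, and with it the partition $S_d=\bigsqcup_{a\in A}(a+(B+C))$ on which your Weil step rests. The paper never needs exact equality: it feeds the lossy bounds $\ll q/(d\delta)$ directly into Ruzsa's inequality, which is why $\delta^{3}$ appears in $|A||B||C|\ll q/(d\delta^3)$ and hence $\delta^{3/2}$ in the final threshold for $d$.
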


\textbf{Notation.} We follow the Vinogradov notation $\ll$. We write $X \ll Y$ if there is an absolute constant $C>0$ so that $|X| \leq CY$.

\textbf{Structure of the paper.} 
In Section~\ref{sec:prelim}, we provide additional background and prove some preliminary results. In Section~\ref{sec:Stepanov}, we prove Theorem~\ref{thm:main} and Corollary~\ref{cor:a+b}. In Section~\ref{sec:A+A}, we prove Theorem~\ref{thm:A+A}. In Section~\ref{sec:A+B+C}, we prove Theorem~\ref{thm:prime3},  Theorem~\ref{thm:dq}, and Theorem~\ref{thm:deltagood}. 

\section{Preliminaries}\label{sec:prelim}

\subsection{Estimates of the size of $A$ and $B$ for $A+B=S_d$} In this section, we collect a few known results on the size of $A$ and $B$ for a given additive decomposition $S_d=A+B$.

\begin{lem}\label{lem:ub}
Let $A,B \subset \F_q$. If $A+B\subset S_d$, then $|A||B|<q$. 
\end{lem}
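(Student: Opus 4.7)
\medskip

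\noindent\textbf{Proof plan for Lemma~\ref{lem:ub}.}

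The plan is to use multiplicative characters of order $d$ to detect membership in $S_d$, and then bound the resulting character sums by the standard Cauchy--Schwarz plus orthogonality computation. We may assume $A, B$ are nonempty. Let $\chi$ be a multiplicative character of $\F_q^*$ of order exactly $d$, extended by $\chi(0) := 0$, and set $\chi^0 := \mathbf{1}_{\F_q^*}$. Then for all $x \in \F_q$ one has $d \cdot \mathbf{1}_{S_d}(x) = \sum_{j=0}^{d-1} \chi^j(x)$. Since $A+B \subset S_d$ forces $a+b \neq 0$ for every pair, we get
\[
|A||B| \;=\; \sum_{a \in A, \, b \in B} \mathbf{1}_{S_d}(a+b) \;=\; \frac{1}{d} \sum_{j=0}^{d-1} \sum_{a,b} \chi^j(a+b),
\]
and isolating the $j=0$ term (which contributes $|A||B|$) leaves
\[
(d-1)|A||B| \;=\; \sum_{j=1}^{d-1} T_j, \qquad T_j := \sum_{a,b} \chi^j(a+b).
\]

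Next, I would estimate each $T_j$ for $j \in \{1,\dots,d-1\}$, using that $\chi^j$ is a \emph{nontrivial} multiplicative character. By Cauchy--Schwarz, extending the outer sum to all of $\F_q$,
\[
|T_j|^2 \;\leq\; |A| \sum_{x \in \F_q} \Bigl| \sum_{b \in B} \chi^j(x+b) \Bigr|^2.
\]
Expanding the square and interchanging sums reduces the inner quantity to an evaluation of $\sum_{x} \chi^j((x+b_1)(x+b_2)^{-1})$; after the M\"obius substitution $t = (x+b_1)/(x+b_2)$ one obtains $-1$ when $b_1 \neq b_2$ and $q-1$ when $b_1 = b_2$, yielding
\[
\sum_{x \in \F_q} \Bigl| \sum_{b \in B} \chi^j(x+b) \Bigr|^2 \;=\; |B|(q-|B|).
\]

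Combining gives $|T_j| \leq \sqrt{|A||B|(q-|B|)}$ for every $j \geq 1$, so by the triangle inequality $(d-1)|A||B| \leq (d-1)\sqrt{|A||B|(q-|B|)}$, and therefore $|A||B| \leq q - |B|$. Rearranged, $|B|(|A|+1) \leq q$, and since $|B| \geq 1$ this gives the strict inequality $|A||B| < q$.

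\medskip

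The only nonroutine step is the orthogonality computation $\sum_{x} |\sum_{b \in B} \chi^j(x+b)|^2 = |B|(q-|B|)$; I expect this to be the main calculational point, but it is entirely standard once one makes the change of variables $t = (x+b_1)/(x+b_2)$ on the off-diagonal. Getting strict (rather than weak) inequality $< q$ requires noting $|B| \geq 1$; this is what rules out the boundary case.
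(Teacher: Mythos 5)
Your proof is correct and is essentially the same approach as the paper's: the paper simply notes that $\chi(a+b)=1$ for all pairs and applies the cited double character sum bound \eqref{eq:double}, whereas you reprove that bound from scratch (Cauchy--Schwarz, completing the sum over $\F_q$, and the orthogonality/M\"obius computation giving $|B|(q-|B|)$) after first decomposing $\mathbf{1}_{S_d}$ into characters. All steps check out, including the strict inequality at the end.
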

\begin{proof}
Let $\chi$ be a multiplicative character of $\F_q$ with order $d$. The following double character sum estimate is well-known (see for example \cite[Theorem 2.6]{Y22}):
 \begin{equation}\label{eq:double}
 \bigg|\sum_{a\in A,\, b\in B}\chi(a+b)\bigg|  \leq \sqrt{q|A||B|}\bigg(1-\frac{|A|}{q}\bigg)^{1/2}\bigg(1-\frac{|B|}{q}\bigg)^{1/2}.
 \end{equation}
Since $A+B\subset S_d $, we have $\chi(a+b)=1$ for all $a \in A$, $b \in B$. It follows that
$$
|A||B| \leq \sqrt{q|A||B|}\bigg(1-\frac{|A|}{q}\bigg)^{1/2}\bigg(1-\frac{|B|}{q}\bigg)^{1/2}<\sqrt{q|A||B|}
$$
and thus $|A||B|<q$. 
\end{proof}

Note that the right-hand side of inequality~\eqref{eq:double} is symmetric in $|A|$ and $|B|$. Based on the application of a bound of Karatsuba~\cite{K91} (more precisely, see \cite[Lemma 2.2]{S13}) on double character sums, that is, an ``asymmetric version" of inequality~\eqref{eq:double}, Shparlinski \cite{S13} proved the following remarkable theorem.

\begin{thm}[Shparlinski {\cite[Theorem 7.1]{S13}}]\label{thm:lball}
If $G$ is a proper multiplicative subgroup of $\F_p$ such that $G=A+B$ for some $A,B \subset \F_p$ with $|A|,|B| \geq 2$, then 
$$
|G|^{1/2 + o(1)}= \min\{|A|, |B|\} \leq \max \{|A|, |B|\}=|G|^{1/2 + o(1)}
$$
as $|G| \to \infty$.
\end{thm}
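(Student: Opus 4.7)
The plan is to derive the result from an asymmetric Karatsuba-type bound on double character sums. The inclusion $A+B\subseteq G$ forces the character sum $\sum_{a,b}\chi(a+b)$ to take its trivial value $|A||B|$, while Karatsuba's bound makes this value small when $|A|$ and $|B|$ are too disparate; combining the two will show that neither set can differ significantly from $|G|^{1/2}$.

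First, set $d=(p-1)/|G|$ and let $\chi$ be a multiplicative character of $\F_p^*$ of order exactly $d$, extended by $\chi(0)=0$. Because $G$ is the subgroup of $d$-th powers, $\chi\equiv 1$ on $G$, so the hypothesis $A+B=G$ yields
$$\sum_{a\in A,\, b\in B}\chi(a+b)=|A||B|.$$
Without loss of generality assume $|A|\le|B|$. For any positive integer $r$, H\"older's inequality with exponents $\tfrac{2r}{2r-1}$ and $2r$ gives
$$|A||B|\le|A|^{1-1/(2r)}\Bigl(\sum_{y\in\F_p}\Bigl|\sum_{b\in B}\chi(y+b)\Bigr|^{2r}\Bigr)^{1/(2r)}.$$

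Next, I would invoke the Karatsuba moment estimate that underlies \cite[Lemma~2.2]{S13}. Expanding the inner $2r$-th power as a sum over tuples $(b_1,\dots,b_{2r})\in B^{2r}$, the terms for which $\prod_{i\le r}(y+b_i)/\prod_{i>r}(y+b_i)$ is a perfect $d$-th power contribute at most $(2r-1)!!\,|B|^r\,p$, while Weil's bound applied to the remaining terms contributes at most $(2r)^{2r}\,p^{1/2}\,|B|^{2r}$, so that
$$\sum_{y\in\F_p}\Bigl|\sum_{b\in B}\chi(y+b)\Bigr|^{2r}\ll (2r)^r\,|B|^r\,p+(2r)^{2r}\,p^{1/2}\,|B|^{2r}.$$
Substituting this bound into the previous display and simplifying the resulting $2r$-th roots yields
$$|A||B|\ll|A|^{1-1/(2r)}\Bigl((2r)^{1/2}\,|B|^{1/2}\,p^{1/(2r)}+2r\,|B|\,p^{1/(4r)}\Bigr).$$

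Finally, I would choose $r\asymp\log|G|$, which absorbs each of the factors $p^{1/(2r)}$, $p^{1/(4r)}$, and the polynomial-in-$r$ prefactors into $|G|^{o(1)}$. A short case analysis depending on which of the two terms on the right dominates then forces the upper bound $|B|\le|G|^{1/2+o(1)}$, hence $\max(|A|,|B|)\le|G|^{1/2+o(1)}$. The companion lower bound $\min(|A|,|B|)\ge|G|^{1/2-o(1)}$ follows from $|A||B|\ge|A+B|=|G|$, and the matching upper bound $\min(|A|,|B|)\le|G|^{1/2+o(1)}$ is automatic from $\min\le\max$.

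The main obstacle is the final optimisation: one has to track the index $d=(p-1)/|G|$ carefully, using the companion estimate $|A||B|<p$ from Lemma~\ref{lem:ub} to rule out the possibility that $d$ is superpolynomial in $|G|$, so that the bound on $|B|$ obtained from the Karatsuba inequality is expressed cleanly in terms of $|G|$ rather than $p$.
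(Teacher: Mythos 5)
There is a genuine gap, and it sits exactly where the paper warns it would. Theorem~\ref{thm:lball} is quoted from \cite[Theorem 7.1]{S13} rather than proved here, and the surrounding discussion states explicitly that its proof needs, besides the Karatsuba-type asymmetric bound you invoke, also \cite[Lemma 3.1]{S13} on the size of intersections of \emph{shifted multiplicative subgroups} --- an ingredient established only over prime fields. Your proposal uses only the Karatsuba moment estimate, and the final optimisation does not go through. In your last display, divide both sides by $|B|$: the second term (which dominates in the relevant range) gives $|A| \ll |A|^{1-1/(2r)}\, 2r\, p^{1/(4r)}$, i.e.\ $|A| \ll (2r)^{2r}\sqrt{p}$. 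The quantity $|B|$ has cancelled, so no case analysis can extract an upper bound on $|B|$ from this inequality; what the method actually produces is an upper bound on $\min\{|A|,|B|\}$ in terms of $p$, not on $\max\{|A|,|B|\}$ in terms of $|G|$. Even if you reverse the roles (H\"older over $B$, moments of $\sum_{a\in A}\chi(y+a)$), the best obtainable conclusion is $\max\{|A|,|B|\}\le p^{1/2+o(1)}$ --- essentially \cite[Theorem 6.1]{S13}, i.e.\ inequality~\eqref{eq:max} --- which coincides with the asserted $|G|^{1/2+o(1)}$ only when the index $d=(p-1)/|G|$ is $p^{o(1)}$. For a proper subgroup with, say, $|G|=p^{2/3}$, the theorem claims $\max\{|A|,|B|\}\le p^{1/3+o(1)}$, which is far beyond the reach of the character-sum argument. (A smaller issue: $r\asymp\log|G|$ makes $p^{1/(2r)}$ into $|G|^{o(1)}$ only when $\log p$ is not too large compared with $\log|G|$.)

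There is also a structural reason the approach cannot be repaired without new input: every tool you use --- Weil's bound, the diagonal count in the $2r$-th moment, H\"older, and Lemma~\ref{lem:ub} --- is valid verbatim over any $\F_q$, yet the theorem is false over general finite fields. Example~\ref{ex:ex1} exhibits a proper subgroup $S_d=\F_{p^k}^*\subset\F_{p^n}$ with $S_d=A+A$ and $|A|\approx((p+1)/2)^k=|S_d|^{1-o(1)}$ as $p\to\infty$, vastly exceeding $|S_d|^{1/2+o(1)}$. Any correct proof must therefore use something specific to prime fields, and that something is precisely the shifted-subgroup machinery: from $A\subseteq\bigcap_{b\in B}(G-b)$ one bounds $|A|$ via Stepanov-type estimates for $|(G-b_1)\cap\cdots\cap(G-b_k)|$ with distinct $b_i$, which is the content of \cite[Lemma 3.1]{S13}. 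Your closing remark that $|A||B|<p$ handles the dependence on $d$ does not substitute for this, since that inequality bounds the product only from above and gives no lower bound on either factor.
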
  

As remarked by Shparlinski~\cite{S13}, one of the main ingredients to prove Theorem~\ref{thm:lball}, namely \cite[Lemma 3.1]{S13} on the size of the intersection of shifted subgroups, has been established only for prime fields. Indeed, Theorem~\ref{thm:lball} fails to extend to finite fields in general; see Example~\ref{ex:ex1} and Example~\ref{ex:ex2}. Nevertheless, for a large multiplicative subgroup of a general finite field, we have the following weaker estimate, which was implicitly discussed in \cite[Section 6]{S13}:

\begin{lem}[Shparlinski \cite{S13}]\label{lem:maxmin}
Let $\epsilon>0$. Let $d \mid (q-1)$ such that $d \leq q^{1/4-\epsilon}$.  If $A+B=S_d$ for some $A,B \subset \F_q$ with $|A|, |B| \geq 2$, then
    $$
    \frac{\sqrt{q}}{d} \ll \min \{|A|,|B|\} \leq \max \{|A|,|B|\} \ll q^{1/2}.
    $$
\end{lem}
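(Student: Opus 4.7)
The plan is to combine Lemma~\ref{lem:ub} with the asymmetric Karatsuba-type bound on double multiplicative character sums that underlies \cite[Section~6]{S13}; in contrast to the intersection-of-shifted-subgroups input \cite[Lemma~3.1]{S13} used for Theorem~\ref{thm:lball}, this bound is available over arbitrary $\F_q$. Without loss of generality assume $|A|\leq|B|$. The hypothesis $A+B=S_d$ gives $|A||B|\geq|S_d|=(q-1)/d$, while Lemma~\ref{lem:ub} gives $|A||B|<q$. These two inequalities already yield the trivial consequences $|A|<\sqrt{q}$ and $|B|>(q-1)/(d\sqrt{q})$; that is, the ``easy halves'' $\min\{|A|,|B|\}\leq\sqrt{q}$ and $\max\{|A|,|B|\}\geq\sqrt{q}/d$. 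The content of the lemma is the complementary inequalities. Moreover, once I establish the upper bound $|B|\ll\sqrt{q}$, the matching lower bound $|A|\gg\sqrt{q}/d$ follows immediately from $|A|\geq(q-1)/(d|B|)$, so the entire task reduces to showing $|B|\ll\sqrt{q}$.

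To that end, fix a multiplicative character $\chi$ of $\F_q^*$ of order exactly $d$. Since $A+B\subset S_d$, we have $\chi(a+b)=1$ for every $a\in A$, $b\in B$, so $\sum_{a\in A,\,b\in B}\chi(a+b)=|A||B|$. I would then apply the asymmetric double character sum estimate of Karatsuba \cite{K91}, in the form used in \cite[Lemma~2.2]{S13}: for every positive integer $\nu$ the left-hand side is bounded by a quantity of the shape $|A|^{1-1/(2\nu)}|B|^{1/2}$ times an explicit factor in $q^{1/(4\nu)}$ and $\nu$. The proof of this bound is a standard moment computation relying only on the Weil bound, so it extends verbatim from $\F_p$ to $\F_q$. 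Comparing this estimate with the identity $\sum\chi(a+b)=|A||B|$ and using $|A|\geq(q-1)/(d|B|)$ to eliminate $|A|$, one arrives at an inequality of the form $|B|\ll d^{c_\nu}\,q^{1/2+\eta_\nu}$, where $c_\nu$ and $\eta_\nu$ are explicit and tend to $0$ as $\nu\to\infty$. Choosing $\nu=\nu(\epsilon)$ sufficiently large and invoking the hypothesis $d\leq q^{1/4-\epsilon}$ then yields $|B|\ll\sqrt{q}$, completing the proof.

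The main obstacle is bookkeeping: verifying that the numerical threshold $1/4$ in the hypothesis $d\leq q^{1/4-\epsilon}$ is the correct one for the asymmetric Karatsuba bound, and that the implicit constants (which depend on $\nu$, hence on $\epsilon$) can be absorbed uniformly. The threshold $1/4$ arises as the $\nu\to\infty$ limit of $1/2-1/(4\nu)$ in the Karatsuba exponent of $q$; matching this limit against the exponent of $d$ after eliminating $|A|$ is where the $1/4-\epsilon$ surfaces. Since the lemma is attributed to \cite{S13} and the argument is already implicit in \cite[Section~6]{S13}, I expect no substantive new difficulty beyond carefully tracking these parameters.
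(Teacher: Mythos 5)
The paper's proof of this lemma is essentially a two-line deduction: the hard half, $\max\{|A|,|B|\}\ll q^{1/2}$, is imported wholesale from Shparlinski's Theorem~6.1 in \cite{S13} (which applies because $d\le q^{1/4-\epsilon}$ forces $|S_d|\gg q^{3/4+\epsilon}$), and the bound $\min\{|A|,|B|\}\gg\sqrt{q}/d$ then follows from $|A||B|\ge|S_d|$ exactly as you say. You correctly identify that everything reduces to $\max\{|A|,|B|\}\ll\sqrt q$, but your plan for proving that bound via the Karatsuba double character sum estimate does not work, and the form of the bound you quote is not correct. The Karatsuba/H\"older--Weil argument gives, for the set that is H\"oldered (say $B$) and the set placed inside the $2\nu$-th moment (say $A$),
$$
\Bigl|\sum_{a\in A,\,b\in B}\chi(a+b)\Bigr|\ll_{\nu}|B|^{1-1/(2\nu)}\bigl(|A|^{1/2}q^{1/(2\nu)}+|A|\,q^{1/(4\nu)}\bigr);
$$
the second term cannot be dropped (your stated form $|A|^{1-1/(2\nu)}|B|^{1/2}q^{1/(4\nu)}$ would force $|A||B|\le q^{1/2+o(1)}$, contradicting $|A||B|\ge|S_d|\gg q^{3/4}$). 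Comparing with the identity $W=|A||B|$, the dichotomy you get is: either $|B|\ll\sqrt q$ (from the second term), or $|A|^{\nu}|B|\ll_\nu q$ (from the first). In the second alternative, substituting $|A|\ge(q-1)/(d|B|)$ goes the wrong way: when $|B|$ is large this lower bound on $|A|$ degenerates to essentially $|A|\ge 2$, and $2^{\nu}|B|\ll_\nu q$ is no contradiction. Concretely, the configuration $|A|=2$, $|B|=(q-1)/(2d)$ has $W=|A||B|=|S_d|$ and satisfies every Weil/Karatsuba double-sum inequality with room to spare (the diagonal term $\nu!\,|A|^{\nu}q$ dominates for bounded $|A|$), so no argument built solely on double character sums can exclude $\max\{|A|,|B|\}\asymp q/d\gg q^{3/4}$.

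The missing ingredient — and the actual content of \cite[Theorem~6.1]{S13} — is of a different nature: since $a+B\subset S_d$ for every $a\in A$, one has $B\subset\bigcap_{i=1}^{k}(S_d-a_i)$ for any $k$ distinct $a_i\in A$, and Weil's bound for the complete sums $\sum_{x}\chi_1(x+a_1)\cdots\chi_k(x+a_k)$ shows this intersection has size $q/d^{k}+O_k(\sqrt q)$; bootstrapping this against $|A||B|\ge(q-1)/d$ eventually drives $q/d^{k}$ below $\sqrt q$ and yields the upper bound on $\max\{|A|,|B|\}$. (This is the general-field substitute for \cite[Lemma~3.1]{S13}, not a consequence of the asymmetric double-sum estimate.) As written, your proposal therefore has a genuine gap at its central step; the safe fix is simply to cite \cite[Theorem~6.1]{S13} as the paper does, or to reproduce the shifted-subgroup intersection argument.
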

\begin{proof}
Since $d \leq q^{1/4-\epsilon}$, we have $|S_d| \gg q^{3/4+\epsilon}$. Thus, \cite[Theorem 6.1]{S13} implies that 
\begin{equation}\label{eq:max}
\max \{|A|,|B|\} \ll q^{1/2}.    
\end{equation}
On the other hand, note that $A+B=S_d$ implies $|A||B|\geq |S_d| \gg \frac{q}{d}$. Thus, it follows that 
 $$
    \min \{|A|,|B|\}=\frac{|A||B|}{ \max \{|A|,|B|\}} \gg \frac{\sqrt{q}}{d}.
    $$
\end{proof}

\subsection{Hyper-derivatives}
The proof of Theorem~\ref{thm:main} replies on computing derivatives of a polynomial over $\F_q$. Note that the $p$-th derivative of a polynomial over $\F_q$ is trivially $0$ due to its characteristic. To overcome the characteristic issue, we need to work on hyper-derivatives (also known as Hasse derivatives) instead. We recall a few basic properties of hyper-derivative; a general discussion can be found in \cite[Section 6.4]{LN97}. 

\begin{defn}
Let $c_0,c_1, \ldots c_d \in \F_q$. If $n$ is a non-negative integer, then the $n$-th hyper-derivative of $f(x)=\sum_{j=0}^d c_j x^j$ is
$$
E^{(n)}(f) =\sum_{j=0}^d \binom{j}{n} c_j x^{j-n},
$$
where we follow the standard convention that $\binom{j}{n}=0$ for $j<n$, so that the $n$-th hyper-derivative is a polynomial.
\end{defn}

\begin{lem}[{\cite[Corollary 6.48]{LN97}}]\label{lem:differentiate}
Let $n,d$ be positive integers. If $c \in \F_q$, then 
$$E^{(n)}\big((x+c)^d\big)=\binom{d}{n} (x+c)^{d-n}.$$
\end{lem}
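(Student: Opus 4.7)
The plan is to expand $(x+c)^d$ by the binomial theorem, apply the definition of the hyper-derivative term by term, and then recognize the resulting polynomial as $\binom{d}{n}(x+c)^{d-n}$ via a standard integer identity for products of binomial coefficients.

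First, I would write $(x+c)^d = \sum_{j=0}^{d} \binom{d}{j}\, c^{d-j} x^j$, which is valid in any commutative ring and hence in $\F_q[x]$. Applying the $n$-th hyper-derivative term by term, using $\F_q$-linearity of $E^{(n)}$ together with the defining formula $E^{(n)}(x^j) = \binom{j}{n} x^{j-n}$, gives
$$E^{(n)}\big((x+c)^d\big) = \sum_{j=n}^{d} \binom{j}{n}\binom{d}{j}\, c^{d-j} x^{j-n}.$$

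Next, I would invoke the elementary integer identity $\binom{j}{n}\binom{d}{j} = \binom{d}{n}\binom{d-n}{j-n}$, which follows at once from writing both sides as $\frac{d!}{n!(j-n)!(d-j)!}$. After pulling the constant $\binom{d}{n}$ outside the sum and reindexing by $k = j-n$, the sum becomes $\sum_{k=0}^{d-n} \binom{d-n}{k}\, c^{d-n-k} x^k$, which the binomial theorem identifies as $(x+c)^{d-n}$, giving the claim.

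The only real subtlety is ensuring that the key binomial identity is not disturbed by characteristic $p$: individual factors such as $\binom{d}{n}$ or $\binom{d-n}{j-n}$ may well vanish modulo $p$, but the equality $\binom{j}{n}\binom{d}{j} = \binom{d}{n}\binom{d-n}{j-n}$ holds already in $\Z$ and therefore descends to $\F_q$, so the potential vanishing never creates an inconsistency. Beyond that single check, the argument is purely mechanical, which is presumably why the paper simply quotes \cite[Corollary 6.48]{LN97}.
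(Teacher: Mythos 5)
Your proof is correct: the binomial expansion, termwise application of the definition of $E^{(n)}$, and the integer identity $\binom{j}{n}\binom{d}{j}=\binom{d}{n}\binom{d-n}{j-n}$ (which holds in $\Z$ and hence descends to $\F_q$) give exactly the claimed formula, and your remark about characteristic $p$ addresses the only genuine subtlety. The paper offers no proof of its own, merely citing \cite[Corollary 6.48]{LN97}, and your computation is the standard self-contained verification of that result.
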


\begin{lem}[{\cite[Lemma 6.51]{LN97}}]\label{lem:multiplicity}
Let $f$ be a nonzero polynomial in $\F_q[x]$. If $c$ is a root of $E^{(k)}(f)$ for $k=0,1,\ldots, m-1$, then $c$ is a root of multiplicity at least $m$. 
\end{lem}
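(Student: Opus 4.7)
The plan is to derive a hyper-derivative version of Taylor's formula and then read off divisibility of $f$ by $(x-c)^m$ directly from it. Specifically, I want to establish the identity
$$f(x) = \sum_{n=0}^{\deg f} E^{(n)}(f)(c)\,(x-c)^n$$
for every $f \in \F_q[x]$ and every $c \in \F_q$. This is the characteristic-free analogue of Taylor's formula; the whole point of working with hyper-derivatives rather than ordinary derivatives is that no factorials appear, so the identity survives in characteristic $p$.

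To prove the identity, I would start from $f(x) = \sum_j c_j x^j$, rewrite $x^j = ((x-c)+c)^j$, and expand with the ordinary binomial theorem to get $f(x) = \sum_j c_j \sum_n \binom{j}{n} c^{j-n} (x-c)^n$. Swapping the order of summation and reading off the coefficient of $(x-c)^n$ gives $\sum_j \binom{j}{n} c_j c^{j-n}$, which is precisely $E^{(n)}(f)(c)$ by definition of the $n$-th hyper-derivative.

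With the Taylor identity in hand, the lemma is essentially immediate. The hypothesis $E^{(k)}(f)(c) = 0$ for $k = 0, 1, \ldots, m-1$ kills every term with $n < m$ in the expansion, leaving
$$f(x) = (x-c)^m \sum_{n \geq m} E^{(n)}(f)(c)\,(x-c)^{n-m},$$
so $(x-c)^m \mid f(x)$ in $\F_q[x]$, and hence $c$ is a root of $f$ of multiplicity at least $m$.

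The whole argument is a short bookkeeping exercise, so I do not anticipate any substantive obstacle; the only thing to be careful about is swapping the order of summation cleanly and resisting the temptation to invoke the usual relation $f^{(n)}(c) = n!\,a_n$, which breaks down precisely once $n \geq p$ and is exactly what the hyper-derivative is designed to bypass.
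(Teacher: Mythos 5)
Your proof is correct: the Hasse--Taylor identity $f(x)=\sum_n E^{(n)}(f)(c)\,(x-c)^n$ follows exactly as you describe from expanding $x^j=((x-c)+c)^j$ and matching coefficients against the paper's definition of $E^{(n)}$, and the lemma then drops out immediately. The paper itself gives no proof (it only cites \cite[Lemma 6.51]{LN97}), and your argument is precisely the standard textbook derivation underlying that reference, so there is nothing to flag.
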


\subsection{Tools from additive combinatorics}
In this section, we list two useful results from additive combinatorics.

The following theorem is a generalization of the classical Cauchy-Davenport theorem.
\begin{thm}[K\'{a}rolyi \cite{K05}]\label{thm:CD}
Let $G$ be a finite group with group operation $+$, and let $A, B \subset G$ be non-empty subsets. Further, let $\rho(G)$ denote the minimum of the orders of nontrivial subgroups of $G$. If $\rho(G)\geq |A|+|B|-1$, then $|A+B|\geq |A|+|B|-1$.   
\end{thm}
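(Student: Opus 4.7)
The plan is to deduce this extended Cauchy--Davenport inequality from Kneser's theorem in the abelian setting (which is the case this paper actually uses, since $\F_q$ is abelian under $+$). The argument is by contradiction: if $|A+B|$ were strictly smaller than $|A|+|B|-1$, Kneser's bound would force the stabilizer of $A+B$ to be a nontrivial subgroup, which the hypothesis $\rho(G) \geq |A|+|B|-1$ immediately rules out.

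More concretely, the key ingredient is Kneser's theorem: for $G$ a finite abelian group and non-empty $A,B \subset G$,
$$|A+B| \;\geq\; |A+H| + |B+H| - |H|,$$
where $H := \{h \in G : h + (A+B) = A+B\}$ is the stabilizer (period) of $A+B$. Since $A \subset A+H$ and $B \subset B+H$, this yields the clean bound $|A+B| \geq |A|+|B|-|H|$.

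Now suppose for contradiction that $|A+B| \leq |A|+|B|-2$. Combining with Kneser gives $|H| \geq 2$, so $H$ is a nontrivial subgroup of $G$. By the hypothesis on $\rho(G)$, this forces $|H| \geq \rho(G) \geq |A|+|B|-1$. On the other hand, $A+B$ is a union of cosets of its stabilizer $H$, so $|A+B| \geq |H|$. Chaining the inequalities yields $|A+B| \geq |H| \geq |A|+|B|-1$, contradicting the assumption $|A+B| \leq |A|+|B|-2$. Hence $|A+B| \geq |A|+|B|-1$, as claimed.

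The main obstacle, if one insists on a self-contained proof, is of course Kneser's theorem itself, whose proof requires a genuine argument (e.g.\ by induction on $|B|$ using the $e$-transform). However, since Kneser's theorem is standard and black-box, the deduction above is essentially immediate. For the fully general (possibly non-abelian) statement of K\'arolyi, a separate and more delicate argument is needed, but this is unnecessary for any application appearing later in the paper, so I would be content to present the abelian proof and remark that non-abelian $G$ reduces to K\'arolyi's original treatment.
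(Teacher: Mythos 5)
The paper does not prove this theorem; it is stated with a citation to K\'arolyi~\cite{K05} and used only through Corollary~\ref{cor:CD} for the abelian group $(\F_q,+)$, so there is no in-paper argument to compare against. Your Kneser-based deduction is correct in the abelian setting: Kneser's theorem gives $|A+B|\geq |A+H|+|B+H|-|H|\geq |A|+|B|-|H|$ where $H$ is the stabilizer of $A+B$; if $H$ is trivial the conclusion is immediate, and if $H$ is nontrivial then $A+B$ is a nonempty union of $H$-cosets, so $|A+B|\geq |H|\geq\rho(G)\geq|A|+|B|-1$ (your contradiction framing is an equivalent phrasing of this dichotomy). This fully covers what the paper actually needs. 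The one caveat, which you flag yourself, is that the statement as written is for arbitrary finite groups, where Kneser's theorem is unavailable and K\'arolyi's argument (induction on $|A|+|B|$ with a group-theoretic reduction to the case of a group of prime order, where one falls back on a Cauchy--Davenport-type bound) is genuinely more involved. If you present the abelian argument in place of the cited theorem, state explicitly that you are proving the special case $G=(\F_q,+)$, or restate the lemma for abelian groups only; otherwise the proof does not match the scope of the claim.
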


\begin{cor}\label{cor:CD}
Let $A, B \subset \F_q$ be non-empty subsets. Then $|A+B|\geq \min \{p, |A|+|B|-1\}$.
\end{cor}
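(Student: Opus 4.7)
The plan is to derive Corollary~\ref{cor:CD} as a direct two-case application of Theorem~\ref{thm:CD} to the additive group $G = (\F_q, +)$. The first step is to identify $\rho(G)$ for this group. Since $q = p^n$ and $\F_q$ has characteristic $p$, the additive group $(\F_q,+)$ is an elementary abelian $p$-group: every nonzero element has order $p$, so the cyclic subgroup generated by any nonzero element has exactly $p$ elements, and every nontrivial subgroup contains such a cyclic subgroup. Hence $\rho(\F_q, +) = p$.

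Next I would split the argument according to whether the hypothesis of Theorem~\ref{thm:CD} is met outright. If $|A| + |B| - 1 \leq p$, then $\rho(G) = p \geq |A| + |B| - 1$, and Theorem~\ref{thm:CD} gives
\[
|A+B| \geq |A| + |B| - 1 = \min\{p,\, |A|+|B|-1\},
\]
which is exactly the desired bound in this regime.

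The remaining case is $|A| + |B| - 1 > p$, i.e.\ $|A| + |B| \geq p+1$. Here I would first shrink $A$ and $B$ to saturate the hypothesis of Theorem~\ref{thm:CD}: choose nonempty $A' \subset A$ and $B' \subset B$ with $|A'| + |B'| = p+1$, which is feasible because $|A|,|B| \geq 1$ and $|A|+|B| \geq p+1$ allow one to pick e.g.\ $|A'| = \min(|A|, p)$ and then $|B'| = p+1-|A'|$, with $1 \leq |B'| \leq |B|$. Since $|A'| + |B'| - 1 = p = \rho(G)$, Theorem~\ref{thm:CD} gives $|A'+B'| \geq p$, and the monotonicity $A'+B' \subset A+B$ then yields $|A+B| \geq p = \min\{p,\, |A|+|B|-1\}$.

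There is essentially no obstacle here; the only point of care is that Theorem~\ref{thm:CD} requires $\rho(G) \geq |A|+|B|-1$ rather than the reverse, so when the sumset is large one must first pass to subsets that exactly meet the threshold $\rho(G)=p$ before applying the theorem.
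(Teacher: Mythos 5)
Your proof is correct and follows the same route as the paper, which simply notes that the minimum order of a nontrivial additive subgroup of $\F_q$ is $p$ and invokes Theorem~\ref{thm:CD}. Your explicit treatment of the case $|A|+|B|-1>p$ --- shrinking to nonempty subsets $A'\subset A$, $B'\subset B$ with $|A'|+|B'|-1=p$ so that the hypothesis $\rho(G)\geq |A'|+|B'|-1$ is met, then using $A'+B'\subset A+B$ --- is exactly the detail the paper's one-line proof leaves implicit, and you handle it correctly.
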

\begin{proof}
The minimum size of a nontrivial (additive) subgroup of $\F_q$ is $p$. Thus, the corollary follows from Theorem~\ref{thm:CD}.    
\end{proof}

The following lemma turns out to be useful for studying ternary decompositions. 

\begin{lem}[Ruzsa \cite{R07}]\label{lem:addcomb}
Let $A, B, C$ be nonempty subsets of $\F_q$. Then $$|A+B+C|^2 \leq |A+B||B+C||C+A|.$$    
\end{lem}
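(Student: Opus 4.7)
The plan is to mimic the argument I would use for Theorem~\ref{thm:dq} but to replace each application of Lemma~\ref{lem:ub} by the sharper conclusion of Theorem~\ref{thm:main}; the $\delta$-good hypothesis is precisely what will let me verify the Kummer-type binomial non-vanishing on suitable modifications of $A, B, C$. Assume for contradiction that $S_d = A+B+C$ with $|A|,|B|,|C|\geq 2$. Since $d \leq q^{1/4-\epsilon}\delta^{3/2} \leq q^{1/4-\epsilon}$, Lemma~\ref{lem:maxmin} applies to each of the three binary decompositions $(B+C)+A = (A+C)+B = (A+B)+C = S_d$, and yields
\begin{equation*}
\frac{\sqrt{q}}{d} \,\ll\, |A|,\,|B|,\,|C|,\,|A+B|,\,|A+C|,\,|B+C| \,\ll\, \sqrt{q}.
\end{equation*}

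The key step is to produce subsets $A'\subseteq A$, $B'\subseteq B$, $C'\subseteq C$ whose cardinalities satisfy the binomial hypothesis of Theorem~\ref{thm:main} yet remain a definite $\delta$-fraction of the originals. Given one of the sets $X$, write $|X|-1 = \sum_j u_j p^j$ in base $p$ and set $N' := 1 + \sum_j \min(u_j,\lfloor\delta(p-1)\rfloor)\,p^j$; take $X'$ to be any $N'$-element subset of $X$. A digit-wise comparison gives $N' \geq (\lfloor\delta(p-1)\rfloor/(p-1))\,|X| \gg \delta|X|$ whenever $\delta(p-1)\geq 2$, which is guaranteed by $p > P(\epsilon)$. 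Moreover, taking $p$ large forces $|X| \ll \sqrt{q} \leq p^{\lfloor n/2\rfloor + 1}$, so every nonzero digit of $|X'|-1$ lies in a position $j \leq \lfloor n/2\rfloor$ controlled by the $\delta$-good condition; there one has $e_j \leq \lfloor(1-\delta)(p-1)\rfloor$ and the $j$-th digit of $|X'|-1$ is at most $\lfloor\delta(p-1)\rfloor$, so the digit sums never exceed $p-1$ and Kummer's theorem supplies the non-vanishing of $\binom{|X'|-1+(q-1)/d}{(q-1)/d}\bmod p$. Apply Theorem~\ref{thm:main} to each of $A' + (B+C)$, $B' + (A+C)$, $C' + (A+B)$; the intersection terms $|A'\cap -(B+C)|$ etc.\ vanish because $0 \notin S_d \supseteq A'+B+C$, and together with $|A'|\gg \delta|A|$, $|B'|\gg\delta|B|$, $|C'|\gg\delta|C|$ this gives
\begin{equation*}
|A|\cdot|B+C|,\quad |B|\cdot|A+C|,\quad |C|\cdot|A+B| \,\ll\, \frac{q}{d\,\delta}.
\end{equation*}

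Multiplying the three and dividing by Ruzsa's bound $|A+B|\,|B+C|\,|A+C|\geq |A+B+C|^2 = ((q-1)/d)^2$ (Lemma~\ref{lem:addcomb}) yields $|A||B||C| \ll q/(d\,\delta^3)$, while the lower bounds $|A|,|B|,|C|\gg \sqrt{q}/d$ from Lemma~\ref{lem:maxmin} force $|A||B||C|\gg q^{3/2}/d^3$. Combining produces $d \gg q^{1/4}\delta^{3/2}$, which contradicts $d \leq q^{1/4-\epsilon}\delta^{3/2}$ once $q > Q(\epsilon)$. The main obstacle is the digit-truncation step: ensuring $|X'| \gg \delta|X|$ while simultaneously shaping the base-$p$ digits of $|X'|-1$ to fit below the $\delta$-slack of $(q-1)/d$ is what forces $p$ to be large relative to $\epsilon$; once that is arranged, the remainder is a clean combination of Theorem~\ref{thm:main}, Ruzsa's inequality, and Lemma~\ref{lem:maxmin}.
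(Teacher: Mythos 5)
You have not proved the statement in question. Lemma~\ref{lem:addcomb} is a purely additive-combinatorial inequality --- $|A+B+C|^2 \leq |A+B||B+C||C+A|$ for arbitrary nonempty subsets of $\F_q$ --- with no reference to multiplicative subgroups, characters, or the parameter $d$. What you have written is instead an argument for Theorem~\ref{thm:deltagood} (the nonexistence of ternary decompositions of $S_d$ for $\delta$-good pairs). Worse, your argument explicitly invokes ``Ruzsa's bound $|A+B|\,|B+C|\,|A+C| \geq |A+B+C|^2$ (Lemma~\ref{lem:addcomb})'' as an ingredient, so read as a proof of that lemma it is circular: you assume exactly what you are asked to establish.

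The paper disposes of this lemma by citation: it is a special case of a theorem of Ruzsa on sumsets of several sets (with a generalization due to Gyarmati, Matolcsi, and Ruzsa). If you want a self-contained argument, the statement is a submodularity-type inequality for sumsets and can be proved either by an explicit injection of $(A+B+C)\times B$ into a product controlled by the pairwise sumsets, or by the entropy method (submodularity of Shannon entropy applied to $a+b$, $b+c$, $c+a$, whose sum determines $2(a+b+c)$ --- with care in characteristic $2$, which is why the combinatorial injection route is the safer one). None of the machinery you deploy --- Stepanov's method, Lucas/Kummer digit conditions, character sum bounds --- is relevant here.
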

\begin{proof}
This is a special case of \cite[Theorem 5.1]{R07} due to Ruzsa (see also a generalization in \cite[Theorem 1.2]{GMR10} by Gyarmati, Matolcsi, and Ruzsa). 
\end{proof}

\section{Proof of Theorem~\ref{thm:main}}\label{sec:Stepanov}

The proof of Theorem~\ref{thm:main} is based on Stepanov's method. The key idea is to construct a low degree \emph{nonzero} polynomial that vanishes on each element of $B$ with high multiplicity.  

\begin{proof}[Proof of Theorem~\ref{thm:main}]
If $|A|=1$, the result is immediate. Next we assume that $|A|\geq 2$. Let $r=|A \cap (-B)|$. Let $A=\{a_1,a_2,\ldots, a_n\}$ and $B=\{b_1,b_2, \ldots, b_m\}$ such that $b_{r+1}, \ldots, b_m \notin (-A)$. Since $A+B \subset S_d \cup \{0\}$, we have $$(a_i+b_j)^{\frac{q-1}{d}+1}=a_i+b_j$$ for each $1 \leq i \leq n$ and $1 \leq j \leq m$. This simple observation will be used repeatedly in the following computation.

Let $c_1,c_2,...,c_n$ be the unique solution of the following system of equations:
\begin{equation} \label{system} 
\left\{
\TABbinary\tabbedCenterstack[l]{
\sum_{i=1}^n c_i a_i^j=0,  \quad 0 \leq j \leq n-2\\\\
\sum_{i=1}^n c_i a_i^{n-1}=1
}\right.    
\end{equation}
This is justified by the invertibility of the coefficient matrix of the system (a Vandermonde matrix).

Consider the following auxiliary polynomial 
$$
f(x)=-1+\sum_{i=1}^n c_i (x+a_i)^{n-1+\frac{q-1}{d}}\in \F_q[x].
$$
We claim that the degree of $f$ is $\frac{q-1}{d}$. Indeed, for each $0 \leq j \leq n-1$, the coefficient of $x^{n-1+\frac{q-1}{d}-j}$ in $f(x)$ is 
$$
\binom{n-1+\frac{q-1}{d}}{j} \cdot \sum_{i=1}^n c_i a_i^{j}.
$$
Thus, system~\eqref{system} implies that the coefficient of $x^{n-1+\frac{q-1}{d}-j}$
is $0$ for $j=0,1, \ldots, n-2$, and the coefficient of $x^{\frac{q-1}{d}}$
is $$\binom{n-1+\frac{q-1}{d}}{n-1}= \binom{n-1+\frac{q-1}{d}}{\frac{q-1}{d}}\neq 0$$ by the assumption. 

Next, we compute the hyper-derivatives of $f$ on $B$. For each $1\leq j \leq m$, system~\eqref{system} implies that
\begin{align*}
E^{(0)} f (b_j)
&= f (b_j)\\
&=  -1+\sum_{i=1}^n c_i (b_j+a_i)^{n-1+\frac{q-1}{d}}  \\
&=  -1+\sum_{i=1}^n c_i (b_j+a_i)^{n-1}   \\
&=  -1+\sum_{\ell=0}^{n-1} \binom{n-1}{\ell} \bigg(\sum_{i=1}^n c_i a_i^\ell\bigg)b_j^{n-1-\ell}    \\
&= -1+\binom{n-1}{n-1} \cdot \sum_{i=1}^n c_i a_i^{n-1}   
=0.
\end{align*}

For each $1\leq j \leq m$ and $1 \leq k \leq n-2$, Lemma~\ref{lem:differentiate} implies that 
\begin{align*}
E^{(k)} f (b_j)
&= \binom{n-1+\frac{q-1}{d}}{k}  \sum_{i=1}^n c_i (b_j+a_i)^{n-1+\frac{q-1}{d}-k} \\
&= \binom{n-1+\frac{q-1}{d}}{k}  \sum_{i=1}^n c_i (b_j+a_i)^{n-1-k}   \\
&= \binom{n-1+\frac{q-1}{d}}{k}  \sum_{\ell=0}^{n-1-k} \binom{n-1-k}{\ell} \bigg(\sum_{i=1}^n c_i a_i^\ell\bigg)b_j^{n-1-k-\ell}   
=0,
\end{align*}
where we again use the assumptions in system~\eqref{system}.

For each $r+1\leq j \leq m$, by the assumption, $b_j \notin (-A)$, that is, $b_j+a_i \neq 0$ for each $1 \leq i \leq n$. Thus, by Lemma~\ref{lem:differentiate}, for each $r+1\leq j \leq m$, we additionally have
\begin{align*}
E^{(n-1)} f (b_j)
&= \binom{n-1+\frac{q-1}{d}}{n-1}  \sum_{i=1}^n c_i (b_j+a_i)^{\frac{q-1}{d}} \\
&= \binom{n-1+\frac{q-1}{d}}{n-1}  \sum_{i=1}^n c_i  =0.
\end{align*}

Given Lemma \ref{lem:multiplicity}, we conclude that each of $b_1,b_2, \ldots b_r$ is a root of $f$ with multiplicity at least $n-1$, and each of $b_{r+1},b_{r+2}, \ldots b_m$ is a root of $f$ with multiplicity at least $n$. 
Therefore
$$
r(n-1)+(m-r)n= mn-r \leq \operatorname{deg}f =\frac{q-1}{d}.
$$
\end{proof}

\begin{rem}\label{rem:Paley}
The proof of Theorem~\ref{thm:main} is inspired by the arguments in \cite{HP}, as well as the arguments in \cite{Yip1, Y21} related to the clique number of (generalized) Paley graphs. Recall that if $q \equiv 1 \pmod {2d}$, then a subset $C \subset \F_q$ in the $d$-Paley graph over $\F_q$ is a clique if and only if $C-C \subset S_d \cup \{0\}$. Let $C$ be a maximum clique in the $d$-Paley graph over $\F_q$; by taking $A$ to be a subset of $C$ with the desired condition on the binomial coefficient and $B=-C$ in the Theorem~\ref{thm:main}, it recovers \cite[Theorem 1.6]{Yip1} and \cite[Theorem 5.8]{Y21} immediately, which are key ingredients in establishing the best-known upper bound on the clique number of Paley graphs \cite{Yip1} and generalized Paley graphs \cite{Y21} over finite fields. In particular, the estimate on the clique number of cubic Paley graphs by the author~\cite[Theorem 1.8]{Y21} has already improved the upper bound on the size of $A \subset \F_p$ such that $A-A=S_3 \cup \{0\}$ proved in the recent work of Wu and She \cite[Theorem 1.4]{WS23} (in fact the result by the author works for a general finite field $\F_q$), perhaps because this connection has been sometimes overlooked. On the other hand, although the work of Hanson and Petridis~\cite{HP} has received lots of attention in the study of Paley graphs, surprisingly, their work did not receive much attention in the study of additive decompositions.
\end{rem}

The following corollary is a simple consequence of Theorem~\ref{thm:main}.

\begin{cor}\label{cor:cora+b}
If $A,B \subset \F_q$ such that $A+B\subset S_d$ and $
\binom{|A|-1+\frac{q-1}{d}}{\frac{q-1}{d}}\not \equiv 0 \pmod p,$
then 
$|A||B|\leq \frac{q-1}{d}$. 
\end{cor}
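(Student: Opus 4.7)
The plan is to deduce this immediately from Theorem~\ref{thm:main}. The only gap between the two statements is the extra term $|A\cap(-B)|$ on the right-hand side and the weaker containment $A+B\subset S_d\cup\{0\}$ in the hypothesis of the theorem. So the task reduces to arguing that, under the strictly stronger hypothesis $A+B\subset S_d$, the intersection $A\cap(-B)$ is empty.

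The key observation is that $S_d=\{x^d:x\in\F_q^*\}$ consists of nonzero elements, so $0\notin S_d$. Hence the assumption $A+B\subset S_d$ forces $0\notin A+B$. If there were some $a\in A\cap(-B)$, then $-a\in B$ and $a+(-a)=0$ would belong to $A+B$, a contradiction. Therefore $A\cap(-B)=\emptyset$, i.e.\ $|A\cap(-B)|=0$.

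Since $A+B\subset S_d\subset S_d\cup\{0\}$ and the binomial coefficient hypothesis is identical in the two statements, Theorem~\ref{thm:main} applies and gives
$$|A||B|\leq \frac{q-1}{d}+|A\cap(-B)|=\frac{q-1}{d},$$
which is the desired inequality. There is no real obstacle here: the corollary is just the specialization of Theorem~\ref{thm:main} to the case where $0$ is excluded from the sumset, and the two-line argument above is the entire proof.
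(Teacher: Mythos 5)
Your proof is correct and is exactly the paper's argument: since $0\notin S_d$, the hypothesis $A+B\subset S_d$ forces $A\cap(-B)=\emptyset$, and Theorem~\ref{thm:main} then gives $|A||B|\leq \frac{q-1}{d}$ directly. Nothing to add.
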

\begin{proof}
Since $0 \notin A+B$, it follows that $A \cap (-B)$ is empty. The corollary follows from Theorem~\ref{thm:main} immediately.   
\end{proof}

As a special case, we deduce Corollary~\ref{cor:a+b} as a partial progress towards the generalized S\'{a}rk\"{o}zy's conjecture.

\begin{proof}[Proof of Corollary~\ref{cor:a+b}]
Let $A,B \subset \F_q$ such that $A+B \subset S_d$. Without loss of generality, we may assume that $|A| \leq |B|$. Since $$|A+B|\leq |S_d|=\frac{q-1}{d}<p,$$ Corollary~\ref{cor:CD} implies that 
$$
2(|A|-1) \leq |A|+|B|-2 \leq |A+B|-1=\frac{q-1}{d}-1.
$$
It follows that
$$
\frac{q-1}{d}\leq |A|-1+\frac{q-1}{d}<\frac{3(q-1)}{2d}\leq p,
$$
and thus $
\binom{|A|-1+\frac{q-1}{d}}{\frac{q-1}{d}}\not \equiv 0 \pmod p.$ 
Corollary~\ref{cor:cora+b} then implies that $|A||B| \leq \frac{q-1}{d}$. If we further assume that $A+B=S_d$, then $$|A||B| \geq |A+B|=|S_d|=\frac{q-1}{d},$$ which forces that $|A||B|=\frac{q-1}{d}$ and thus all sums $a+b$ are distinct.
\end{proof}

Next, we take a closer look at the proof of Theorem~\ref{thm:main} and make new observations.

\begin{rem}
Suppose $A,B \subset \F_q$ such that $A+B=S_d$ and $
\binom{|A|-1+\frac{q-1}{d}}{\frac{q-1}{d}}\equiv 0 \pmod p$. Then Theorem~\ref{thm:main} does not apply. However, we can still say something nontrivial from the proof of Theorem~\ref{thm:main}. In the proof of Theorem~\ref{thm:main}, the assumption on the binomial coefficient guarantees that the polynomial $f$ is nonzero, so that we can apply Lemma~\ref{lem:multiplicity} to obtain an upper bound on $|A||B|$ based on the degree of $f$. It is clear that without the condition on the binomial coefficient, we can still conclude the same upper bound if the polynomial $f$ is nonzero.

Indeed, we can say something stronger. We follow the same notations as in the proof of Theorem~\ref{thm:main}. Since $A+B=S_d$, we have $r=0$. The same computations would show that each element in $B$ is a root of $f$ with multiplicity at least $n=|A|$. However, since $
\binom{|A|-1+\frac{q-1}{d}}{\frac{q-1}{d}}\equiv 0 \pmod p$, the degree of $f$ is strictly less than $\frac{q-1}{d}$. If $f$ is a nonzero polynomial, then Lemma~\ref{lem:multiplicity} implies that
$$
\frac{q-1}{d}=|S_d|=|A+B| \leq |A||B|\leq \deg f <\frac{q-1}{d},
$$
a contradiction. Therefore, the polynomial $f$ must be zero.
\end{rem}

We summarize the above discussions into the following proposition. Roughly speaking, it predicts that any supposed additive decompositions of $S_d$ must have a very rigid structure, providing some evidence for the generalized S\'{a}rk\"{o}zy's conjecture.

\begin{prop}\label{prop:structure}
Let $d \mid (q-1)$ such that $d>1$. If $A,B \subset \F_q$ such that $A+B=S_d$ with $|A|, |B| \geq 2$, then one of the following two situations happens:
\begin{itemize}
    \item $|A||B|=|S_d|$, that is, all sums $a+b$ are distinct;
    \item $|A||B|>|S_d|$. In this case, $|A|$ and $|B|$ must satisfy  
    $$
    \binom{|A|-1+\frac{q-1}{d}}{\frac{q-1}{d}} \equiv 0 \pmod p, \quad \binom{|B|-1+\frac{q-1}{d}}{\frac{q-1}{d}} \equiv 0 \pmod p.
    $$
Moreover, if we write $A=\{a_1,a_2, \ldots, a_n\}$ and $B=\{b_1, b_2, \ldots, b_m\}$, then 
$$
\binom{n-1+\frac{q-1}{d}}{j} \cdot \sum_{i=1}^n c_i a_i^{j}=0, \quad \binom{m-1+\frac{q-1}{d}}{\ell} \cdot \sum_{k=1}^m d_k b_k^{\ell}=0
$$
for all $0 \leq j<n-1+\frac{q-1}{d}$ and $0 \leq \ell<m-1+\frac{q-1}{d}$, and
$$
\sum_{i=1}^n c_i a_i^{n-1+\frac{q-1}{d}}=1, \quad \sum_{k=1}^m d_k b_k^{m-1+\frac{q-1}{d}}=1,
$$
where $c_i$'s and $d_k$'s are uniquely determined by the following systems of linear equations:
\begin{equation} \label{system111} 
\left\{
\TABbinary\tabbedCenterstack[l]{
\sum_{i=1}^n c_i a_i^j=0,  \quad 0 \leq j \leq n-2\\\\
\sum_{i=1}^n c_i a_i^{n-1}=1\\\\
\sum_{k=1}^m d_k b_k^\ell=0,  \quad 0 \leq \ell \leq m-2\\\\
\sum_{k=1}^m d_k b_k^{m-1}=1
}\right.    
\end{equation}
\end{itemize}

\end{prop}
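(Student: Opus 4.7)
The plan is to repackage the proof of Theorem~\ref{thm:main} in the equality setting $A+B=S_d$, together with the observation made in the Remark just above the statement. First, I would apply Corollary~\ref{cor:CD} (or just the trivial lower bound $|A+B|\le |A||B|$) to get $|A||B|\ge |A+B|=|S_d|=(q-1)/d$. So the two alternatives in the proposition are exactly the cases $|A||B|=|S_d|$ and $|A||B|>|S_d|$; it remains to extract the structural consequences in the second case. Also, since $A+B=S_d$ and $0\notin S_d$, the set $A\cap(-B)$ is empty, so the parameter $r$ appearing in the proof of Theorem~\ref{thm:main} is zero.

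Now suppose $|A||B|>|S_d|$. I would first show $\binom{n-1+(q-1)/d}{(q-1)/d}\equiv 0\pmod p$, where $n=|A|$. Otherwise, Theorem~\ref{thm:main} applied with $r=0$ would give $|A||B|\le (q-1)/d=|S_d|$, contradicting the case assumption. Swapping the roles of $A$ and $B$ (the hypothesis is symmetric) yields the analogous divisibility for $|B|-1+(q-1)/d$.

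For the explicit equations I would revisit the polynomial
$$
f(x)=-1+\sum_{i=1}^n c_i(x+a_i)^{n-1+\frac{q-1}{d}}
$$
from the proof of Theorem~\ref{thm:main}, where the $c_i$'s satisfy system~\eqref{system}. Since $r=0$, the hyper-derivative computations in that proof show that every $b\in B$ is a root of $f$ of multiplicity at least $n$; in particular, if $f\not\equiv 0$, then
$$
|A||B|=nm\le \deg f.
$$
The vanishing of $\binom{n-1+(q-1)/d}{(q-1)/d}$ modulo $p$ forces the coefficient of $x^{(q-1)/d}$ in $f$ to be zero, so $\deg f<(q-1)/d=|S_d|\le |A||B|$, a contradiction. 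Hence $f$ is identically zero. Reading off every coefficient of $f$ — the coefficient of $x^{n-1+(q-1)/d-j}$ is $\binom{n-1+(q-1)/d}{j}\sum_{i=1}^n c_i a_i^j$ for $0\le j\le n-2+(q-1)/d$, while the constant term is $-1+\sum_{i=1}^n c_i a_i^{n-1+(q-1)/d}$ — delivers exactly the listed identities on $c_i$ and $a_i$. The mirror argument with the roles of $A$ and $B$ interchanged (so one constructs the analogous polynomial from system~\eqref{system111} using the $d_k$'s) produces the identities on $d_k$ and $b_k$.

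There is no real obstacle beyond careful bookkeeping: the one point to verify is that in the proof of Theorem~\ref{thm:main}, with $r=0$ and $A+B\subset S_d$ (no $\{0\}$), one genuinely gets multiplicity at least $n$ at every point of $B$ rather than just $n-1$, which is immediate from the $E^{(n-1)}f(b_j)=0$ calculation under the condition $b_j+a_i\neq 0$ for all $i$. Everything else is linear algebra with the Vandermonde-determined $c_i$'s and $d_k$'s, and the symmetric swap $A\leftrightarrow B$.
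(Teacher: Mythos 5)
Your proposal is correct and follows essentially the same route as the paper: the paper derives the proposition from the remark preceding it, which likewise sets $r=0$, notes that the vanishing binomial coefficient forces $\deg f<\frac{q-1}{d}$ while each $b\in B$ is a root of multiplicity at least $n$, concludes $f\equiv 0$, and reads off the coefficients. Your added care about why the multiplicity is $n$ rather than $n-1$ (since $0\notin S_d$ gives $b_j+a_i\neq 0$) matches the paper's reasoning exactly.
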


\begin{rem}\label{rem:structure}
Following the notations used in Proposition~\ref{prop:structure}, suppose that $A+B=S_d$ with $|A||B|>|S_d|$. Proposition~\ref{prop:structure} implies that there are lots of generalized Vandermonde matrices associated with $A$ that are singular. In particular, for each $n \leq j_0<n-1+\frac{q-1}{d}$ such that $\binom{n-1+\frac{q-1}{d}}{j_0} \not \equiv 0 \pmod p$, we must have $\sum_{i=1}^n c_i a_i^{j_0}=0$, and thus the generalized Vandermonde matrix 
$$(a_i^j)_{1 \leq i \leq n, j \in \{0,1, \ldots, n-2\} \cup \{j_0\}}$$ must be singular (for otherwise, all $c_i$ must be 0). For such a $j_0$, we can use Jacobi's bialternant formula to compute the determinant of the above generalized Vandermonde matrix and it follows that 
$$0=s_{(j_0-(n-1),0, 0, \ldots, 0)}(a_1,a_2, \ldots, a_n),
$$
where $s$ is the Schur polynomial, and thus
$$
h_{j_0-(n-1)}(a_1,a_2, \ldots, a_n)=\sum_{1 \leq i_1 \leq i_2 \leq \cdots \leq i_{j_0-(n-1)}\leq n} a_{i_1}a_{i_2} \cdots a_{i_{j_0-(n-1)}}=0
$$ 
where $h_{j_0-(n-1)}$ is the complete homogeneous symmetric polynomial of degree $j_0-(n-1)$ (see for example \cite[Chapter 4]{S03}). This observation, together with some basic properties of symmetric polynomials, allows us to predict the algebraic structure of $A$, as well as eliminate certain sizes of $A$.
\end{rem}

\begin{ex}
Let $q=p^2$, where $p$ is a sufficiently large prime. Assume that $A, B \subset \F_q$ such that $A+B=S_2$ with $|A|, |B| \geq 2$ and $|A||B|>|S_2|$. Then we can use Proposition~\ref{prop:structure} to obtain some nontrivial information of $|A|$ and $|B|$. Without loss of generality, assume that $|A| \leq |B|$. Note that the $p$-adic expression of $\frac{q-1}{2}$ is simply $(\frac{p-1}{2}, \frac{p-1}{2})_p$. Since $\binom{|A|-1+\frac{q-1}{2}}{\frac{q-1}{2}} \equiv 0 \pmod p$, we must have $|A|\geq \frac{p+3}{2}$. On the other hand, Lemma~\ref{lem:ub} implies that $|A||B|< q=p^2$ and thus $|B| \leq 2(p-3)$. Since $\binom{|B|-1+\frac{q-1}{2}}{\frac{q-1}{2}} \equiv 0 \pmod p$, we must have $|B|=ap+b$, where $b \in \{0, \frac{p+3}{2}, \frac{p+5}{2}, \ldots, p-1\}$. Thus, there are two possibilities:
\begin{itemize}
    \item $\frac{3p+3}{2}\leq |B| \leq 2p-6$, and $\frac{p+3}{2}\leq |A|<\frac{2p}{3}$.
    \item $\sqrt{\frac{p^2-1}{2}}<|B|\leq p$, and $\frac{p+3}{2} \leq |A| \leq |B|$.
\end{itemize}
In particular, these estimates improves the best-known estimate that $\min \{|A|, |B|\} \geq (2+o(1)) \frac{p \log 2}{8 \log p}$ \cite[Lemma 5.1]{S13} under the assumption that $|A||B|>|S_2|$. One can also compare our bounds with the best-known estimates on $|A|$ and $|B|$ over prime fields by Chen and Xi \cite{CX22}.
\end{ex}

\section{Proof of Theorem~\ref{thm:A+A}}

In this section, we study additive decompositions of the special form $A+A$.

\begin{proof}[Proof of Theorem~\ref{thm:A+A}]\label{sec:A+A}
Let $q=p^n$, and let $(d,q)$ be a good pair. For the sake of contradiction, assume that $A+A=S_d$ for some $A \subset \F_q$. Then Lemma~\ref{lem:ub} implies that $|A|<\sqrt{q}=p^{n/2}$. On the other hand, since $a+a'=a'+a$ for each $a,a' \in A$, it follows that
\begin{equation}\label{eq/2}
 \frac{q-1}{d}=|S_d|=|A+A|\leq \frac{|A|^2+|A|}{2}.    
\end{equation}

If $\frac{q-1}{d} \leq \frac{2p}{3}$ (in particular, if $q=p$ is a prime), then by Corollary~\ref{cor:a+b}, $|A|^2=\frac{q-1}{d}$. Thus, inequality~\eqref{eq/2} implies that $|A|=1$. It follows that $|S_d|=|A+A|=1$ and thus $d=q-1$, violating our assumption.

Next, we assume that $q$ is a proper prime power. Let $k$ be the unique integer such that $p^k \leq |A|-1<p^{k+1}$. Since $|A|<p^{n/2}$, it follows that $k<n/2$. Thus we can write $$|A|-1=(c_k, c_{k-1}, \ldots, c_1,c_0)_p$$ in base-$p$, that is, $|A|-1=\sum_{i=0}^k c_ip^i$ with $0 \leq c_i \leq p-1$ for each $0 \leq i \leq k$ and $c_k \geq 1$. Also write $$\frac{q-1}{d}=(e_n, e_{n-1}, \ldots, e_1,e_0)_p$$ in base-$p$; note that it is possible that $e_n=0$.

The key idea is to apply Corollary~\ref{cor:cora+b}. To do so, we take a subset $A'$ of $A$ such that 
\begin{equation}\label{binomial}
 \binom{|A'|-1+\frac{q-1}{d}}{\frac{q-1}{d}}\not \equiv 0 \pmod p,   
\end{equation}
then Corollary~\ref{cor:cora+b} implies that $|A||A'|\leq \frac{q-1}{d}$. To derive the contradiction, in view of inequality~\eqref{eq/2}, it suffices to show that we can pick $A'$ so that the inequality $|A'|> \frac{|A|+1}{2}$ and equation~\eqref{binomial} both hold. Also note that if $|A'|=\frac{|A|+1}{2}$, then we must have $|A||A'|=\frac{q-1}{d}$. 

Next, we consider the other three sufficient conditions for a good pair $(d,q)$ separately:

(1) Assume that $e_j \leq \frac{p-1}{2}$ for each $j<n/2$. Since $k<n/2$, we have $e_j \leq \frac{p-1}{2}$ for each $0 \leq j \leq k$. We construct a desired $A'$ according to the following two cases:
\begin{itemize}
    \item 
If $c_k\leq \frac{p-1}{2}$, then we can pick $A'$ such that $|A'|-1=c_kp^k$ so that $$c_k+e_k\leq \frac{p-1}{2}+\frac{p-1}{2} \leq p-1.$$
Thus, Lucas' theorem implies that
$$
\binom{|A'|-1+\frac{q-1}{d}}{\frac{q-1}{d}}\equiv \prod_{j=k+1}^n \binom{e_j}{e_j} \cdot \binom{c_k+\frac{p-1}{d}}{\frac{p-1}{d}} \cdot \prod_{j=0}^{k-1} \binom{e_j}{e_j}\not \equiv 0 \pmod p,
$$
In this case, $|A|\leq (c_k+1)p^k$ and thus
$$
|A'|=c_kp^k+1= \frac{2c_kp^k+2}{2}>\frac{(c_k+1)p^k+1}{2}\geq \frac{|A|+1}{2}
$$
by the assumption that $c_k \geq 1$.

\item
If $c_k>\frac{p-1}{2}$, then we can pick $A'$ such that $|A'|-1=\frac{p^{k+1}-1}{2}$ so that 
$$
\binom{|A'|-1+\frac{q-1}{d}}{\frac{q-1}{d}}\equiv \prod_{j=k+1}^n \binom{e_j}{e_j} \cdot \prod_{j=0}^{k} \binom{\frac{p-1}{2}+e_j}{e_j}\not \equiv 0 \pmod p,
$$
where we used Lucas' theorem. Note that in this case, by the assumption, $|A|-1<p^{k+1}$ so $|A|\leq p^{k+1}$. It follows that $|A'|=\frac{p^{k+1}+1}{2} \geq \frac{|A|+1}{2}$. Thus, it suffices to rule out the case of equality. However, if $|A'|=\frac{|A|+1}{2}$, then we must have $|A|=p^{k+1}$ and $|A||A'|=\frac{q-1}{2}$, implying that $q-1=p^{2k+2}+p^{k+1}$, which is clearly impossible by considering modulo $p$ on both sides of the equation.
\end{itemize}

(2) Assume that $n=2r+1$ with $r \geq 1$, $d\leq 2p-2$, and $e_r\leq p-1-\lceil \sqrt{p} \rceil/2$. Since $d\leq 2p-2$, inequality~\eqref{eq/2} implies that
$$
p^{2r}+p^r<\sum_{i=0}^{2r} p^i=\frac{p^{2r+1}-1}{p-1} \leq \frac{2(q-1)}{d}\leq |A|^2+|A| \implies p^r<|A|.
$$
Thus, $p^r \leq |A|-1<\sqrt{q}-1=\sqrt{p} \cdot p^r-1$. It follows that $k=r$ and $1\leq c_r\leq \lfloor \sqrt{p} \rfloor$. 
\begin{itemize}
    \item If $c_r+e_r\leq p-1$, then we can pick $A'$ such that $|A'|-1=c_rp^r$. Similar to the analysis in case (1), we can show that the inequality $|A'|> \frac{|A|+1}{2}$ and equation~\eqref{binomial} both hold. 
    \item If $c_r+e_r \geq p$, then we can pick $A'$ such that $|A'|-1=(p-1-e_r)p^r$. We can verify that equation~\eqref{binomial} holds in a similar way using    Lucas' theorem. By the assumption, $e_r\leq p-1-\lceil \sqrt{p} \rceil/2$. It follows that $p-1-e_r \geq \lceil \sqrt{p} \rceil/2$ and thus
    $$|A'| \geq (\lceil \sqrt{p} \rceil/2)p^r+1>\frac{p^{r+1/2}}{2}+1=\frac{\sqrt{q}+2}{2}>\frac{|A|+1}{2}.
    $$

\end{itemize}

(3) Assume that $n=2r$, $d \leq 2p^2$, and $e_{r-1} \leq (p-3)/2$. 
Similar to the analysis in case (2), we can show that $p^{r-1} \leq |A|-1<\sqrt{q}=p \cdot p^{r-1}$ so that $k=r-1$. Let $c_{r-1}'=\min \{c_{r-1}, \frac{p+1}{2}\}$ so that $e_{r-1}+c_{r-1}'\leq p-1$. Take a subset $A'$ of $A$ with $|A'|-1=c_{r-1}' \cdot p^{r-1}$. Similar to the analysis in cases (1) and (2), we can verify that the inequality $|A'|> \frac{|A|+1}{2}$ and equation~\eqref{binomial} both hold. 
\end{proof}

The following corollary is a special case of Corollary~\ref{cor:A+A}.

\begin{cor}
Let $q=p^n$ with $q \equiv 1 \pmod d$ and $d \geq 2$. Assume that $p \equiv 1 \pmod d$ holds, or $p \equiv -1 \pmod d$ and $4 \mid n$ both hold. Then there is no additive decomposition $S_d=A+A$ for any $A \subset \F_q$. 
\end{cor}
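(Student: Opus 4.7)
The plan is to deduce the statement directly from Corollary~\ref{cor:A+A} by checking that each of the two hypotheses translates into one of the three sufficient conditions already listed there. Since Corollary~\ref{cor:A+A} does all the real work via Theorem~\ref{thm:A+A}, what remains is only a short arithmetic verification involving the multiplicative order of $p$ modulo $d$.

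First, I would handle the case $p \equiv 1 \pmod d$. This is literally equivalent to $d \mid (p-1)$, which is the first bullet of Corollary~\ref{cor:A+A}, so nothing further is needed in this case.

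Next, I would handle the case $p \equiv -1 \pmod d$ together with $4 \mid n$. The subcase $d = 2$ falls back under the previous case because then $p \equiv 1 \pmod 2$ as well. So assume $d \geq 3$; then $-1 \not\equiv 1 \pmod d$, so the order $k$ of $p$ modulo $d$ is not $1$, while $p^2 \equiv 1 \pmod d$ forces $k = 2$. The assumption $4 \mid n$ then reads $2k \mid n$, and $p \equiv -1 \pmod d$ gives $d \mid (p+1)$, hence $d \leq p+1 \leq 2p^2$. Thus the third bullet of Corollary~\ref{cor:A+A} applies, and the desired conclusion follows.

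I do not anticipate a genuine obstacle; the only minor point to monitor is the implicit range $d < q-1$ in Corollary~\ref{cor:A+A}. In the second case this is automatic, since $4 \mid n$ combined with $d \mid (p+1)$ yields $d \leq p+1 < p^n - 1 = q - 1$ whenever $n \geq 4$ and $p \geq 2$; in the first case the statement is understood in its nondegenerate range $d < q-1$, the alternative $d = q-1$ with $p \equiv 1 \pmod d$ forcing the trivial situation $q = p$ and $S_d = \{1\}$. Hence the proof is essentially a two-line bookkeeping argument reducing to the already-established Corollary~\ref{cor:A+A}.
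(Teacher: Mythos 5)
Your proposal is correct and matches the paper's (implicit) argument: the paper simply declares this corollary a special case of Corollary~\ref{cor:A+A}, and you supply exactly the needed bookkeeping — $p\equiv 1\pmod d$ gives the first bullet, while for $d\geq 3$ the congruence $p\equiv -1\pmod d$ forces $k=2$, so $4\mid n$ and $d\leq p+1\leq 2p^2$ give the third bullet. Your attention to the degenerate range $d<q-1$ is a welcome extra precaution that the paper itself glosses over.
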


The next example shows that it is necessary to assume that $d \geq 2$ in Theorem~\ref{thm:A+A}, and have some additional assumptions when $q$ is a proper prime power.

\begin{ex}\label{ex:ex1}
Let $p\geq 7$ be an odd prime, and $n$ be a positive integer. We claim that $\F_{p^n}^*$ can be decomposed as $A+A$. Indeed, since $\F_{p^n}$ is isomorphic to the $n$-dimensional space over $\F_p$, it suffices to show that $\F_{p}^n \setminus \{\mathbf{0}\}$ can be decomposed as $A+A$, where $A \subset \F_{p}^n$. We can take 
$$
A=\bigg(\bigg\{0,1,2, \ldots, \frac{p-3}{2}\bigg\} \cup \bigg\{\frac{p+1}{2}\bigg\}\bigg)^n \setminus \{\mathbf{0}\}
$$
to achieve this purpose. Also note that $|A|=(\frac{p+1}{2})^n-1$ and thus $|A|^2/p^n$ could be arbitrarily large.

Let $q=p^n$, with $p \geq 7$ and $n \geq 2$. Let $k$ be a proper divisor of $n$ and let $d=\frac{q-1}{p^k-1}$. Then $S_d=\F_{p^k}^*$ can be decomposed as $A+A$ by the above argument. This example shows that when $q$ is a proper prime power, one needs to impose some conditions on the pair $(d,q)$ so that Theorem~\ref{thm:A+A} holds. It also shows that the condition on the binomial coefficient in Theorem~\ref{thm:main} cannot be dropped, and shows that Corollary~\ref{cor:a+b} does not extend to all pairs $(d,q)$. 
\end{ex}

We end the section by comparing our new results with existing results.

\begin{rem}\label{rmk:A+A}
Shkredov's proof \cite{S14} that $A+A \neq S_2$ for any $A \subset \F_p$ with $p\geq 5$ is elegant and remarkable. In fact, the identical proof shows that $A+A \neq S_2$ for any $A \subset \F_q$, where $q \geq 5$ is an odd prime power. Here we sketch his proof. From inequality~\eqref{eq:double}, we can deduce that $|A|\leq \sqrt{q} (1-\frac{|A|}{q})$, equivalently, $q+\frac{|A|^2}{q} \geq |A|^2+2|A|$. On the other hand, $|A+A|=|S_2|$ implies that $|A|^2+|A| \geq q-1$ (see inequality~\eqref{eq/2}). It is easy to deduce a contradiction by comparing the above two inequalities. Unfortunately, it seems his proof does not extend to $S_d$ for $d \geq 3$. Nevertheless, Shkredov also showed a similar result for the restricted sumset $A \hat{+} A = \{ a + b: a,b \in A, \, a \neq b \}$, for which our techniques do not seem to apply. 
\end{rem}

\begin{rem}
In fact, in his paper, Shkredov~\cite{S14} established a much stronger result, namely, $A+A$ is impossible to be too close to $S_2$. We remark that under the same assumption on the pair $(d,q)$ in Corollary~\ref{cor:a+b}, Corollary~\ref{cor:a+b} implies that: if $A+A \subset S_d$, then $|A+A|\leq (\frac{1}{2}+o(1))|S_d|$. In particular, this improves and generalizes Shkredov's result \cite[Theorem 3.2]{S14}. The proof of Theorem~\ref{thm:A+A} could be also adapted to estimate the maximum size of $A+A$ (or the maximum size of $A$) provided that $A+A \subset S_d$.
\end{rem}

\begin{rem}
Wu and She \cite{WS23} recently studied the additive decomposition of $S_3$. Let $p \equiv 1 \pmod 3$. They showed that if $A+B=S_3$ holds for some $A, B \subset \F_p$ with $|A|=|B|$, then $\sqrt{\frac{p-1}{3}} \leq |A| \leq \sqrt{p}$ \cite[Theorem 1.3]{WS23}. Note that Corollary~\ref{cor:a+b} shows that we must have $|A|=|B|=\sqrt{\frac{p-1}{3}}$. They also had an improved lower bound on $|A|$ if $A=B$, however, this case has been ruled out by Corollary~\ref{cor:A+A}.
\end{rem}

\begin{rem}
The possibility of writing $S_d \cup \{0\}$ as the difference set $A-A$ have also been studied extensively; see for example \cite{S16, LS17, S18, HP, Y21,Y22, Yip1}. As mentioned in Remark~\ref{rem:Paley}, this problem is also closely related to the clique number of generalized Paley graphs. More generally, this problem is related to the Paley graph conjecture (see for example \cite[Section 2.2]{Y22}), which is widely open.

Murphy, Petridis, Roche-Newton, Rudnev, and Shkredov \cite[Theorem 16]{MPRRS19} showed that multiplicative subgroups of size less than $p^{6/7-o(1)}$ cannot be represented in the form $(A-A) \setminus \{0\}$ for any $A \subset \F_p$. Unfortunately, it seems the proof techniques in Theorem~\ref{thm:A+A} fail to extend to $A-A$. Nevertheless, the proof of Corollary~\ref{cor:a+b} can be modified slightly to show that under the same condition on the pair $(d,q)$, if $A \subset \F_q$ such that $A-A=S_d \cup \{0\}$, then $|A|^2-|A|=|S_d|$. In particular, this leads to new families of pairs $(d,q)$ such that $A-A \neq S_d \cup \{0\}$ for any $A \subset \F_q$, which is beyond \cite{S16} and \cite[Corollary 1.6]{HP}.
\end{rem}

\section{No nontrivial ternary decomposition}\label{sec:A+B+C}

In this section, we establish various sufficient conditions on the pair $(d,q)$ so that $S_d \subset \F_q$ admits no nontrivial ternary decomposition.

\begin{proof}[Proof of Theorem~\ref{thm:prime3}]
Assume that there exists a prime $p$ and a multiplicative subgroup $G$ of $\F_p$, such that $G$ admits a nontrivial additive decomposition: $G=A+B+C$ with $|A|,|B|, |C| \geq 2$. Then we can write $G$ in three different ways: $$A+(B+C), B+(C+A), C+(A+B).$$ Applying Theorem~\ref{thm:lball} to each sumset, we have $$|A|, |B|, |C| \gg |G|^{1/2+o(1)}.$$ We can also apply Corollary~\ref{cor:a+b} to obtain that 
$$
|A+B||C|,|B+C||A|,|C+A||B|\ll |G|.
$$
Therefore, from Lemma~\ref{lem:addcomb} and the fact $|A+B+C|=|G|$, we have
$$
|G|^2 |A||B||C|\ll |A+B+C|^2 |A||B||C| \leq (|A+B||C|)(|B+C||A|)(|C+A||B|) \ll |G|^3.
$$
It follows that 
$$
|G|^{3/2+o(1)} \ll |A||B||C| \ll |G|,
$$
that is, $|G|\ll 1$, where the implicit constant is absolute. This completes the proof of the theorem.
\end{proof}

\begin{rem}
When $d$ is fixed, and $p \equiv 1 \pmod d$ is sufficiently large, there is an alternative way to show that there is no nontrivial ternary decomposition of $S_d$. Suppose $A+B+C=S_d$, where $A, B, C \subset \F_p$ and $|A|, |B|, |C| \geq 2$. Then Lemma~\ref{lem:maxmin} implies that $|A|, |B|, |C| \gg \sqrt{p}$. However, when $p$ is sufficiently large, this would violate a ternary character sum estimate proved by Hanson \cite[Theorem 1]{H17} that 
$$
\sum_{a \in A, b \in B, c \in C} \chi(a+b+c)=o(|A||B||C|),
$$
where $\chi$ is a multiplicative character with order $d$. 
\end{rem}

Next, we modify the proof of Theorem~\ref{thm:prime3} to establish Theorem~\ref{thm:dq}.

\begin{proof}[Proof of Theorem~\ref{thm:dq}]
The proof is similar to that of Theorem~\ref{thm:prime3}. Let $\epsilon$ be fixed. Assume that $(d,q)$ is a pair with $d \leq q^{1/10-\epsilon}$, such that $S_d$ can be decomposed into $S_d=A+B+C$ for some $A,B,C \subset \F_q$ with $|A|,|B|, |C| \geq 2$. Applying Lemma~\ref{lem:maxmin} and Lemma~\ref{lem:ub} to each of the sumsets $A+(B+C), B+(C+A), C+(A+B)$, we have
$$
|A|, |B|, |C| \gg \frac{\sqrt{q}}{d}, \quad  |A||B+C|,|B+C||A|, |C+A||B| <q.
$$
Therefore, the same argument as in the proof of Theorem~\ref{thm:prime3} gives the estimate
$$
\frac{q^{3.5}}{d^5} \ll |A+B+C|^2 |A||B||C| \leq (|A+B||C|)(|B+C||A|)(|C+A||B|) \ll q^3.
$$
It follows that $d \gg q^{1/10}$. Together with the assumption $d\leq q^{1/10-\epsilon}$, we conclude that $q\ll 1$, where the implicit constant depends only on $\epsilon$. This completes the proof of the theorem.
\end{proof}

The next example provides a counterexample when the conditions in Theorem~\ref{thm:prime3} and Theorem~\ref{thm:dq} are weakened.

\begin{ex}\label{ex:ex2}
Let $p\geq 5$ be a prime, and $n$ be a positive integer. We claim that $\F_{p^n}^*$ can be decomposed as $A+B+C$ nontrivially. Indeed, since $\F_{p^n}$ is isomorphic to the $n$-dimensional space over $\F_p$, it suffices to show that $\F_{p}^n \setminus \{\mathbf{0}\}$ can be decomposed as $A+B+C$, where $A,B,C \subset \F_{p}^n$. We can take 
$$
A=\{0,1\}^n, \quad B=\{0,1\}^n, \quad \text{and } C=\{0,1,2, r+3, r+6, \ldots, p-3\}^n \setminus \{\mathbf{0}\}
$$
to achieve this purpose, where $p \equiv r \pmod 3$ with $r \in \{1,2\}$.  

Let $q=p^n$, with $p \geq 5$ and $n \geq 2$. Let $k$ be a proper divisor of $n$ and let $d=\frac{q-1}{p^k-1}$. Then $S_d=\F_{p^k}^*$ can be decomposed as $A+B+C$ nontrivially by the above argument. Note that $|S_d|$ could be arbitrarily large, which shows that Theorem~\ref{thm:prime3} fails to extend to all finite fields. It also shows that in the statement of Theorem~\ref{thm:dq}, it is necessary to impose the condition that $q$ is sufficiently large compared to $d$; in particular, the exponent $\frac{1}{10}$ cannot be replaced by any constant which is greater than $\frac{1}{2}$. It is interesting to explore if the exponent $\frac{1}{10}$ could be improved. Under extra assumptions, we improve the exponent to roughly $\frac{1}{4}$ in Theorem~\ref{thm:deltagood}.
\end{ex}

Next, we combine the ideas used in the previous discussions to prove Theorem~\ref{thm:deltagood}.

\begin{proof}[Proof of Theorem~\ref{thm:deltagood}]
Let $\epsilon$ be fixed. Let $M$ be the implicit constant from inequality~\eqref{eq:max}; note $M$ only depends on $\epsilon$. Let $P=M^2+1$.

Let $q=p^n$ with $p>P$. Assume that $(d,q)$ is a $\delta$-good pair with $\delta \in (0,1)$ and $d \leq q^{1/4-\epsilon}\delta^{3/2}$, such that $S_d$ can be decomposed into $S_d=A+B+C$ for some $A,B,C \subset \F_q$ with $|A|,|B|, |C| \geq 2$. Then we can write $S_d$ in three different ways: $A+(B+C), B+(C+A), C+(A+B)$. Applying Lemma~\ref{lem:maxmin} to each of these sumsets, we have 
\begin{equation}\label{eq:lb}
|A|, |B|, |C| \gg \frac{\sqrt{q}}{d},    
\end{equation}
and 
$$
|A|, |B|, |C| \leq M\sqrt{q}<p^{(n+1)/2}.
$$

Let $k$ be the unique integer such that $p^k \leq |A|-1<p^{k+1}$. By the assumption, $k<(n+1)/2$. Then we can write $$|A|-1=(c_k, c_{k-1}, \ldots, c_1,c_0)_p$$ in base-$p$, where $c_k\geq 1$. Also write $$\frac{q-1}{d}=(e_n, e_{n-1}, \ldots, e_1,e_0)_p$$ in base-$p$, where $e_n$ is possibly $0$.

Since $(d,q)$ is a $\delta$-good pair, we have $e_k\leq \lfloor(1-\delta)(p-1)\rfloor$. Let $A'$ be a subset of $A$ such that $|A'|-1=c_k' p^k$, where $c_k'=\min \{c_k, \lceil \delta (p-1) \rceil\}$ so that 
$$c_k'+e_k\leq \lceil \delta (p-1) \rceil +\lfloor(1-\delta)(p-1)\rfloor= p-1.$$
Then Lucas' theorem implies that
$$
\binom{|A'|-1+\frac{q-1}{d}}{\frac{q-1}{d}}\equiv \prod_{j=k+1}^n \binom{e_j}{e_j} \cdot \binom{c_k'+e_k}{e_k} \cdot \prod_{j=0}^{k-1} \binom{e_j}{e_j}\not \equiv 0 \pmod p.
$$
Similar to the proof of Theorem~\ref{thm:A+A}, it is easy to verify that $$|A'| \gg \min \{|A|/2,\delta|A|\} \gg \delta |A|$$ from our construction. Since $A' +(B+C) \subset A+B+C=S_d$, Corollary~\ref{cor:a+b} implies that 
$$
|A||B+C| \ll \frac{|A'||B+C|}{\delta}\ll \frac{q}{d\delta}.
$$

Using a similar argument, we can also show that
$$
|A+B||C|\ll \frac{q}{d\delta}, \quad |C+A||B|\ll \frac{q}{d\delta}.
$$
Therefore, from Lemma~\ref{lem:addcomb} and the fact $|A+B+C|=|S_d|$, we have
$$
\frac{q^2}{d^2} |A||B||C|\ll |A+B+C|^2 |A||B||C| \leq (|A+B||C|)(|B+C||A|)(|C+A||B|) \ll \frac{q^3}{d^3\delta^3}.
$$
Combining inequality~\eqref{eq:lb}, we have 
$$
\frac{q^{3/2}}{d^3} \ll |A||B||C| \ll \frac{q}{d \delta^3},
$$
that is, $d \gg q^{1/4} \delta^{3/2}$. Together with the assumption $d\leq q^{1/4-\epsilon} \delta^{3/2}$, we conclude that $q\ll 1$, where the implicit constant depends only on $\epsilon$. This completes the proof of the theorem.
\end{proof}

Finally, we deduce a simple corollary of Theorem~\ref{thm:deltagood}.

\begin{cor}\label{cor:order1}
There is an absolute constant $P$, such that for pair of prime power $q=p^n$ and $d$ such that  $n \geq 5$, $p>P$, and $d \mid (p-1)$, we have $S_d \neq A+B+C$ for any $A,B,C \subset \F_q$ with $|A|,|B|,|C| \geq 2$.  
\end{cor}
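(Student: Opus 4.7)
The plan is to reduce to Theorem~\ref{thm:deltagood} by choosing $\delta = 1/2$ and a suitably small $\epsilon$. The key observation is that when $d \mid (p-1)$, the base-$p$ expansion of $\frac{q-1}{d}$ is completely explicit and each digit is bounded by $(p-1)/2$, which is exactly what is needed for $(d,q)$ to be $\frac{1}{2}$-good.

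First, I would write $m = (p-1)/d$, noting that since $d \geq 2$ we have $m \leq (p-1)/2$, and (assuming $P \geq 3$) $p$ is odd so $\lfloor(p-1)/2\rfloor = (p-1)/2$. Using the factorization
\[
\frac{q-1}{d} \;=\; \frac{p^n-1}{d} \;=\; m \cdot \frac{p^n-1}{p-1} \;=\; \sum_{j=0}^{n-1} m\, p^j,
\]
and noting $m \leq p-1$, this is already the base-$p$ expansion: $e_j = m$ for $0 \leq j \leq n-1$ and $e_n = 0$. Therefore, for every $j$ (in particular for $0 \leq j < (n+1)/2$),
\[
e_j = m \leq \frac{p-1}{2} = \Big\lfloor \tfrac{1}{2}(p-1) \Big\rfloor,
\]
so $(d,q)$ is $\frac{1}{2}$-good.

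Next I would verify the size condition of Theorem~\ref{thm:deltagood}. Fix once and for all $\epsilon = 1/40$ and $\delta = 1/2$. Since $d \mid (p-1)$, we have $d \leq p - 1 < p = q^{1/n} \leq q^{1/5}$ by the hypothesis $n \geq 5$. Because $\tfrac{1}{4} - \epsilon - \tfrac{1}{5} = \tfrac{1}{40} > 0$, there is an absolute constant $Q_0$ such that whenever $q \geq Q_0$,
\[
q^{1/5} \;\leq\; \frac{q^{1/4 - \epsilon}}{2\sqrt{2}} \;=\; q^{1/4-\epsilon}\,\delta^{3/2},
\]
which would yield $d \leq q^{1/4-\epsilon}\delta^{3/2}$. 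Define
\[
P \;=\; \max\bigl(P(\epsilon),\, Q(\epsilon)^{1/5},\, Q_0^{1/5}\bigr) + 1,
\]
where $P(\epsilon)$ and $Q(\epsilon)$ are the constants from Theorem~\ref{thm:deltagood}; this $P$ is an absolute constant. For $p > P$ and $n \geq 5$ we then have $p > P(\epsilon)$ and $q = p^n \geq p^5 > \max(Q(\epsilon), Q_0)$, so all hypotheses of Theorem~\ref{thm:deltagood} are in force and its conclusion gives the desired statement.

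There is no real obstacle here; the whole point of Theorem~\ref{thm:deltagood} is to accommodate precisely this kind of family. The only mild bookkeeping is choosing $\epsilon$ strictly smaller than $\tfrac{1}{4} - \tfrac{1}{5} = \tfrac{1}{20}$ so that $d \leq q^{1/5}$ comfortably fits under the threshold $q^{1/4-\epsilon}\delta^{3/2}$, and absorbing the two threshold constants $P(\epsilon), Q(\epsilon)$ from Theorem~\ref{thm:deltagood} into a single absolute $P$ via the relation $q \geq p^5$.
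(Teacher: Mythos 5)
Your proof is correct and follows essentially the same route as the paper: observe that every base-$p$ digit of $\frac{q-1}{d}$ equals $\frac{p-1}{d}\leq\frac{p-1}{2}$, so $(d,q)$ is $\frac12$-good, and then invoke Theorem~\ref{thm:deltagood} with $d\leq p\leq q^{1/5}$. Your choice $\epsilon=1/40$ (rather than the paper's $\epsilon=1/20$) is in fact slightly more careful, since it leaves room to absorb the factor $\delta^{3/2}=1/(2\sqrt{2})$ in the hypothesis $d\leq q^{1/4-\epsilon}\delta^{3/2}$.
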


\begin{proof}
Take $\epsilon=\frac{1}{20}$ so that $d \leq p-1<p\leq q^{1/5}=q^{1/4-\epsilon}$. Note that all digits in the base-$p$ representation of $\frac{q-1}{d}$ are equal to $\frac{p-1}{d}\leq \frac{p-1}{2}$. So the pair $(d,q)$ is $\delta$-good with $\delta \geq \frac{1}{2}$.  The corollary follows from Theorem~\ref{thm:deltagood}.
\end{proof} 

\section*{Acknowledgement}
The author thanks Seoyoung Kim, Greg Martin, J\'ozsef Solymosi, and Semin Yoo for helpful discussions. The author is also grateful to anonymous referees for their valuable comments and suggestions.

\bibliographystyle{abbrv}
\bibliography{main}

\end{document}